\newtheorem{thm}{Theorem}[section]
\newtheorem{lem}[thm]{Lemma}
\newtheorem{cor}[thm]{Corollary}
\newtheorem{remark}[thm]{Remark}
\theoremstyle{definition}
\newtheorem{defi}{Definition}[section]
\allowdisplaybreaks\allowdisplaybreaks[4]
\begin{document}
\title{The Kirchhoff Index of Enhanced Hypercubes\footnote{Supported
by the National Natural Science Foundation of China (Grant Nos. 11671344, 11701492).}}
\author{{\small Ping Xu, \ \ Qiongxiang Huang\footnote{
Corresponding author.
Email: huangqx@xju.edu.cn, huangqxmath@163.com}}\\[2mm]\scriptsize
College of Mathematics and Systems Science,
\scriptsize Xinjiang University, Urumqi, Xinjiang 830046, P.R.China}
\date{}
\maketitle {\flushleft\large\bf Abstract } Let $\{e_{1},\ldots,e_{n}\}$ be the standard basis of abelian group $Z_{2}^{n}$, which can be also viewed as a linear space of dimension $n$ over the Galois filed $F_{2}$, and $\epsilon_{k}=e_k+e_{k+1}+\cdots+e_n$ for some $1\le k\le n-1$. It is well known that the so called enhanced hypercube $Q_{n, k}(1\le k \le n-1)$ is just the Cayley graph $Cay(Z_{2}^{n},S)$ where $S=\{e_{1},\ldots, e_{n},\epsilon_{k}\}$. In this paper, we obtain the spectrum of $Q_{n, k}$, from which we give an exact formula of the Kirchhoff index of the enhanced hypercube $Q_{n, k}$. Furthermore, we prove that, for a given $n$, $Kf(Q_{n, k})$ is increased with the increase of $k$. Finally, we get
$\lim\limits_{n\to\infty}\frac{Kf(Q_{n, k})}{\frac{2^{2n}}{n+1}}=1$.
\begin{flushleft}
\textbf{Keywords:}
Enhanced hypercube; Irreducible characters; Kirchhoff index
\end{flushleft}
\textbf{AMS subject classifications:} 05C50;05C25

\section{Introduction}\label{se-1}
An interconnection network is always represented by a graph $\Gamma=(V,E)$, where $V$ denotes the node set and $E$ denotes the edge set. Now various interconnections are proposed. The hypercubes network obtained considerable attention in virtue of its perfect properties, such as symmetry, regular structure, strong connectivity, and small diameter \cite{Bossard, Fink}. An $n$-dimensional hypercube denoted by $Q_{n}$ has $2^{n}$ vertices, and the vertex set is $V(Q_{n})=\{x_{1}x_{2}\cdots x_{n} \mid x_{i}=0~~ or~~ 1, i=1,2,\ldots, n\}$. Two vertices $X=x_{1}x_{2}\cdots x_{n}$ and $Y=y_{1}y_{2}\cdots y_{n}$ are adjacent if and only if there exists  $1\le i \le n$, such that $x_{i}=\overline{y_{i}}$, where $\overline{y_{i}}$ denoted the complement of binary digit $y_{i}$, and $x_{j}=y_{j}$ for all $j\ne i$. As the importance of the hypercubes networks, many variants of it were presented, among which, for instance, are enhanced hypercube, augmented hypercube, folded hypercube \cite{Tzeng, El-Amawy, Choudum}. The $n$-dimensional enhanced hypercube is one of the important variants of hypercube introduced by Tzeng in \cite{Tzeng} which is defined as follows.
\begin{defi}\label{defi-1}
For $n\ge 2$ and $1\le k\le n-1$, the enhanced hypercube $Q_{n, k}=(V,E)$ is an undirected simple graph with vertex set  $V=\{x_{1}x_{2}\cdots x_{n}\mid x_{i}=0~~ \emph {or}~~ 1, i=1,2,\ldots,n\}$. Two vertices $X=x_{1}x_{2}\cdots x_{n}$ and $Y=y_{1}y_{2}\cdots y_{n}$ are adjacent if $Y$ satisfies one of the following two conditions:
\begin{enumerate}
\item[(1)] $Y=x_{1}x_{2}\cdots x_{i-1}\overline x_{i}x_{i+1}\cdots x_{n}$ for some $1\le i \le n$;
\item[(2)] $Y=x_{1}x_{2}\cdots x_{k-1}\overline x_{k} \overline x_{k+1}\cdots \overline x_{n}$.
\end{enumerate}
\end{defi}
According to the above definition, we can see that $Q_{n, k}$ contains $Q_{n}$ as its subgraph. In fact,  $Q_{n, k}$ is a $(n+1)$-regular graph with $2^{n}$ vertices and $(n+1)2^{n-1}$ edges. Its special case of $k=1$ is the well-known folded hypercube denoted by $FQ_{n}$. As a variant of the hypercube, the n-dimensional folded hypercube $FQ_{n}$, proposed first by El-Amawy and Latifi \cite{El-Amawy}, is a graph obtained from the hypercube $Q_{n}$ by adding some edges, called a complementary edges, between  vertices $X=x_{1}x_{2}\cdots x_{n}$ and $\overline X=\overline x_{1}\overline x_{2}\cdots \overline x_{n}$. The enhanced hypercube is superior to the hypercube in many aspects. For example, the diameter of the enhanced hypercube is almost half of the hypercube. The hypercube is $n$-regular and $n$-connected, whereas the enhanced hypercube is $(n+1)$-regular and $(n+1)$-connected \cite{Liu-1}.

Let $G$ be a finite group, and let $S$ be a symmetric subset of $G$ such that $\mathbf{1}\not\in S$ and $S^{-1}=\{s^{-1}\mid s\in S\}=S$. The \emph{Cayley graph} on $G$ with respect to a symmetric subset $S$ of $G$, denoted by $Cay(G,S)$, is the undirected graph with vertex set $G$ and with an edge $\{g,h\}$ connecting $g$ and $h$ if $hg^{-1}\in S$, or equivalently $gh^{-1}\in S$. It is well known that $Cay(G,S)$ is connected if and only if $S$ generates $G$. Particularly, the group $Z_{2}^{n}=Z_{2}\times Z_{2}\times \cdots\times Z_{2}$ can be viewed as a vector space of dimension $n$ over the Galois filed $F_{2}$. Suppose that $\{e_{1},\ldots,e_{n}\}$ is the standard  basis of $Z_2^n$ and $\epsilon_{k}$ is the vector with first $k-1$ entries equal to $0$ and other entries equal to $1$ for some  $1\le k\le n-1$. Let $S=\{e_{1},\ldots, e_{n},\epsilon_{k}\}$ be the subset of $Z_2^n$. It is clear that  the so called enhanced hypercube $Q_{n, k}$ is just the Cayley graph $Cay(Z_2^n,S)$.

The concept of resistance distance of a graph $\Gamma=(V, E)$ was first introduced by Klein and Randi\'{c} \cite{Klein-1}. Let $\Gamma$ be a connected graph. The resistance distance between vertices $v_{i}$ and $v_{j}$ of $\Gamma$, denoted by $r_{ij}$, is defined  to be the effective resistance between the nodes $v_{i}$ and $v_{j}$ as computed with ohm's law when all the edges of $\Gamma$ are considered to be unit resistors. The traditional distance between vertices $v_{i}$ and $v_{j}$, denoted by $d_{ij}$, is the length of a shortest path connecting them. The Wiener index $W(\Gamma)$ was given by $W(\Gamma)=\sum\limits_{i<j}d_{ij}$ in \cite{Wiener}. As an analogue to the Wiener index, the sum $Kf(\Gamma)=\sum\limits_{i<j}r_{ij}$ was proposed in \cite{Klein-1}, later called the Kirchhoff index of $\Gamma$ in \cite{Bonchev}. Klein and Randi\'{c} \cite{Klein-1} proved that $r_{ij}\le d_{ij}$ and thus $Kf(\Gamma)\le W(\Gamma)$ with equality if and only if $\Gamma$ is a tree.

The Kirchhoff index has wide applications in physical interpretations, electric circuit, graph theory, and chemistry. For example, Gutman and Mohar \cite{Gutman} and Zhu, Klein et al. \cite{H.Y} proved that the Kirchhoff index of a connected graph $\Gamma$ with $n(n\ge2)$ vertices is the sum of reciprocal nonzero Laplacian eigenvalues of the graph multiplied by the number of the vertices. Like the Wiener index, the Kirchhoff index is a structure descriptor \cite{W.J}. The Kirchhoff index has been computed for cycles \cite{Klein-2, Lukovits}, complete graphs \cite{Lukovits}, geodetic graphs \cite{Palacios}, distance transitive graphs \cite{Palacios}, and so on. The Kirchhoff index of certain composite operations between two graphs was studied, such as product, lexicographic product \cite{H.Xu} and join, corona, cluster \cite{H.Zhang}.

In \cite{Liu-2}, we can see that the exact formula of the Kirchhoff index of the hypercubes networks $Q_{n}$ and related complex networks had been provided. Motivated by previous results, in this paper we present the formulae for the Kirchhoff index of the enhanced hypercube $Q_{n, k}$ and such formulae are brief for $Q_{n, 1}$ and $Q_{n, n-1}$. Moreover, we  prove that $Kf(Q_{n, k})$ is increased as $k$ increases for a given $n$. Additionally, the bounds of $Kf(Q_{n, k})$ and its Limit function is obtained, that is, $\lim\limits_{n\to\infty}\frac{Kf(Q_{n, k})}{\frac{2^{2n}}{n+1}}=1$.

\section{Preliminaries}\label{se-2}

Let $\Gamma$ be a simple graph with vertex set $V$ and edge set $E$. The \emph{adjacency matrix} $A(\Gamma)$ of $\Gamma$ is the $n\times n$ matrix with the $(i, j)$-entry equal to $1$ if vertices $v_{i}$ and $v_{j}$ are adjacent and $0$ otherwise. For $v_i\in V$, let $N(v_i)$ denote the set of neighbours of $v_i$, that is, $N(v_i)=\{v_j\in V\mid v_j\sim v_i\}$. The size of $N(v_i)$ is called the \emph{degree} of $v_i$, denoted by $d_i$. Let $D(\Gamma)$ be the diagonal matrix with $i$-th diagonal entry equal to $d_i$. The \emph{Laplacian matrix} of $\Gamma$ is defined by $L(\Gamma)=D(\Gamma)-A(\Gamma)$. The eigenvalues of $A(\Gamma)$ and $L(\Gamma)$ are called the \emph{adjacency eigenvalues} and \emph{Laplacian eigenvalues} of $\Gamma$, respectively. The multiset of adjacency (resp. Laplacian) eigenvalues together with their multiplicities is called the \emph{adjacency (resp. Laplacian) spectrum} of $\Gamma$, denoted by $\mathrm{Spec}_A(\Gamma)$ (resp. $\mathrm{Spec}_L(\Gamma)$).

In \cite{Babai}, Babai derived an expression for the spectrum of  the Cayley graph $Cay(G,S)$ in terms of irreducible characters of $G$. Here, we need only to know the eigenvalues of the Cayley graph of an abelian group.

\begin{lem}[\cite{Steinberg}]\label{lem-1}
Let $G=\{g_{1},g_{2},\ldots,g_{n}\}$ be an abelian group and $S\subseteq G$ is a symmetric set. Let $\chi_{1},\ldots,\chi_{n}$ be the irreducible characters of $G$ and Let $A$ be the adjacency matrix of the Cayley graph of $G$ with respect to $S$. Then the eigenvalues of the adjacency matrix $A$ are the real numbers
\[\lambda_{i}=\sum\limits_{s\in S}\chi_{i}(s)\]
where $1\le i\le n$.
\end{lem}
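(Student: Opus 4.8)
The plan is to exhibit an explicit eigenbasis of the adjacency matrix $A$ built directly from the irreducible characters, exploiting the defining feature of a finite abelian group: every irreducible representation is one-dimensional, so each $\chi_i$ is a group homomorphism $G\to\mathbb{C}^{\times}$. Concretely, for each $\chi_i$ I would form the column vector $v_i=(\chi_i(g_1),\chi_i(g_2),\ldots,\chi_i(g_n))^{T}$ indexed by the group elements, and then verify that $A v_i=\lambda_i v_i$ with $\lambda_i=\sum_{s\in S}\chi_i(s)$.

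The central computation is short. By the definition of the Cayley graph, the $(g,h)$-entry of $A$ equals $1$ precisely when $hg^{-1}\in S$, so in row $g$ the column index $h$ ranges over $\{sg:s\in S\}$. Hence
\[
(A v_i)_g=\sum_{h}A_{g,h}\,\chi_i(h)=\sum_{s\in S}\chi_i(sg).
\]
Because $\chi_i$ is a homomorphism, $\chi_i(sg)=\chi_i(s)\chi_i(g)$; factoring out $\chi_i(g)=(v_i)_g$ gives $(A v_i)_g=\big(\sum_{s\in S}\chi_i(s)\big)(v_i)_g=\lambda_i(v_i)_g$, as claimed.

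It remains to account for the entire spectrum and to check reality. Since $|G|=n$, the group has exactly $n$ distinct irreducible characters, and the orthogonality relations force $v_1,\ldots,v_n$ to be pairwise orthogonal, hence linearly independent; they therefore form a complete eigenbasis of the $n\times n$ matrix $A$, so its full eigenvalue multiset is $\{\lambda_i:1\le i\le n\}$. Each $\lambda_i$ is real: using that $S^{-1}=S$ and that $\chi_i(s^{-1})=\overline{\chi_i(s)}$ for a unitary character, one obtains $\lambda_i=\sum_{s\in S}\chi_i(s^{-1})=\overline{\lambda_i}$, consistent with $A$ being real symmetric.

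I expect the only genuine input to be the structural fact that abelian groups have one-dimensional irreducible representations, since this is exactly what turns the homomorphism property into the eigenvalue identity. The eigenvector check is then a one-line computation and completeness is immediate from character orthogonality, so I anticipate no serious obstacle.
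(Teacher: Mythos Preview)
Your proof is correct and is the standard argument. The paper does not actually prove this lemma; it is quoted without proof from Steinberg's textbook \cite{Steinberg}, so there is no in-paper argument to compare against. What you wrote---building the eigenvectors $v_i=(\chi_i(g))_{g\in G}$, using multiplicativity of $\chi_i$ to get $Av_i=\lambda_i v_i$, invoking character orthogonality for completeness, and using $S=S^{-1}$ together with $\chi_i(s^{-1})=\overline{\chi_i(s)}$ for reality---is exactly the proof one finds in the cited source.
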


\begin{lem}[\cite{Steinberg}]\label{lem-2}
Let $G_{1},G_{2}$ be abelian groups and suppose that $\chi_{1},\ldots,\chi_{m}$ and $\varphi_{1},\ldots,\varphi_{n}$ are the irreducible representations of $G_{1}, G_{2}$, respectively. In particular, $m=|G_{1}|$ and $n=|G_{2}|$. Then the functions $\alpha_{ij}:G_{1}\times G_{2}\rightarrow \mathbb{C^{*}}$ with $1\le i\le m, 1\le j\le n$ given by
\[\alpha_{ij}(g_{1}, g_{2})=\chi_{i}(g_{1})\varphi_{j}(g_{2})\]
form a complete set of irreducible representations of $G_{1}\times G_{2}$.
\end{lem}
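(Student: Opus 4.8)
The plan is to exploit the fact that for a finite abelian group every irreducible representation is one-dimensional, so each $\chi_i$ and each $\varphi_j$ is simply a group homomorphism into $\mathbb{C}^*$, and likewise every irreducible representation of $G_1\times G_2$ is a homomorphism $G_1\times G_2\to\mathbb{C}^*$. Under this reduction the statement becomes an assertion purely about linear characters, which I would establish in three steps: first that each $\alpha_{ij}$ is a character, second that the $\alpha_{ij}$ are pairwise distinct, and third a counting argument forcing them to exhaust all characters.

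First I would verify that each $\alpha_{ij}$ is itself an irreducible representation of $G_1\times G_2$. Since $\chi_i$ and $\varphi_j$ are homomorphisms, a direct computation gives
\[
\alpha_{ij}\big((g_1,g_2)(h_1,h_2)\big)=\chi_i(g_1h_1)\varphi_j(g_2h_2)=\chi_i(g_1)\varphi_j(g_2)\,\chi_i(h_1)\varphi_j(h_2)=\alpha_{ij}(g_1,g_2)\,\alpha_{ij}(h_1,h_2),
\]
so $\alpha_{ij}$ is a homomorphism into $\mathbb{C}^*$, i.e.\ a one-dimensional and hence irreducible representation of $G_1\times G_2$.

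Next I would show the $mn$ functions $\alpha_{ij}$ are pairwise distinct. Restricting the identity $\alpha_{ij}=\alpha_{i'j'}$ to the subgroup $G_1\times\{1\}$ and using $\varphi_j(1)=\varphi_{j'}(1)=1$ yields $\chi_i=\chi_{i'}$, whence $i=i'$; restricting instead to $\{1\}\times G_2$ yields $\varphi_j=\varphi_{j'}$, whence $j=j'$. Equivalently, one may verify orthonormality: since the inner product factors over the direct product,
\[
\frac{1}{mn}\sum_{(g_1,g_2)\in G_1\times G_2}\alpha_{ij}(g_1,g_2)\overline{\alpha_{i'j'}(g_1,g_2)}=\Big(\tfrac{1}{m}\sum_{g_1}\chi_i(g_1)\overline{\chi_{i'}(g_1)}\Big)\Big(\tfrac{1}{n}\sum_{g_2}\varphi_j(g_2)\overline{\varphi_{j'}(g_2)}\Big)=\delta_{ii'}\delta_{jj'},
\]
the last equality being the orthogonality relations for the irreducible characters of $G_1$ and of $G_2$.

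Finally I would close the argument by counting. Because $G_1\times G_2$ is a finite abelian group, its number of irreducible characters equals its order $|G_1\times G_2|=mn$. We have exhibited exactly $mn$ distinct irreducible characters $\alpha_{ij}$, so they must constitute all of them; that is, they form a complete set of irreducible representations of $G_1\times G_2$. There is no serious obstacle here: the only point requiring care is invoking the correct count (for an abelian group, the order equals the number of irreducibles) so that distinctness is upgraded to completeness, rather than merely exhibiting a family of genuine characters.
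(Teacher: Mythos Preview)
Your proof is correct and is the standard argument for this fact. Note, however, that the paper does not actually supply a proof of this lemma: it is quoted verbatim from the reference \cite{Steinberg} and used as a black box, so there is no ``paper's own proof'' to compare against. Your three-step argument (each $\alpha_{ij}$ is a homomorphism into $\mathbb{C}^*$; the $\alpha_{ij}$ are pairwise distinct by restriction or by orthogonality; and a count of $mn$ against $|G_1\times G_2|=mn$ forces completeness) is exactly the textbook proof one would find in Steinberg or any introductory representation-theory source.
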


It is well known that $Z_2$ has two irreducible characters $\chi_0(a)=1(\forall a\in Z_2)$ and $\chi_1(a)=(-1)^{a}(\forall a\in Z_2)$. As a direct consequence of Lemma \ref{lem-2}, the irreducible characters $\chi_{i_{1},i_{2},\ldots,i_{n}}$ of $Z_{2}^{n}$ are given by
\begin{equation}\label{ch-eq-1}\chi_{i_{1},\ldots,i_{n}}((a_{1},\ldots,a_{n}))=(-1)^{i_{1}a_{1}+\cdots+i_{n}a_{n}}\end{equation}
where $(a_{1},\ldots,a_{n})\in Z_{2}^{n}$ and $i_{j}\in\{0,1\}$ for $1\le j\le n$. From (\ref{ch-eq-1}), Lemma \ref{lem-1} and Lemma \ref{lem-2}, we get the eigenvalues of $Cay(Z_2^n,S)$.

\begin{thm}\label{thm-1}
Let $S$ be a subset of  $Z_{2}^{n}$. Then the adjacency eigenvalues of the Cayley graph $Cay(Z_{2}^{n}, S)$ are given by
\[\lambda_{i_{1},i_{2},\ldots,i_{n}}=\sum\limits_{(s_{1},s_{2},\ldots,s_{n})\in S}(-1)^{i_{1}s_{1}+\cdots+i_{n}s_{n}}\]
where $(i_{1},\ldots,i_{n})\in Z_{2}^{n}$.
\end{thm}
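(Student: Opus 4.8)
The plan is to combine the three preceding results mechanically. Theorem \ref{thm-1} follows directly from Lemma \ref{lem-1} applied to the specific group $G = Z_2^n$, once we know the explicit form of the irreducible characters of $Z_2^n$. So the entire argument is: identify the characters, substitute into the eigenvalue formula of Lemma \ref{lem-1}, and simplify.

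First I would invoke Lemma \ref{lem-1}. Since $Z_2^n$ is abelian and $S$ is a symmetric subset (indeed every element of $Z_2^n$ is its own inverse, so any subset is automatically symmetric), the lemma tells us that the eigenvalues of the adjacency matrix of $Cay(Z_2^n, S)$ are exactly the numbers $\lambda = \sum_{s \in S} \chi(s)$, one for each irreducible character $\chi$ of $Z_2^n$. The number of irreducible characters equals $|Z_2^n| = 2^n$, matching the number of vertices and hence the number of eigenvalues, so this accounts for the full spectrum.

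Next I would substitute the explicit character formula. Equation (\ref{ch-eq-1}), which is itself a consequence of Lemma \ref{lem-2} together with the two characters of $Z_2$, states that the irreducible characters of $Z_2^n$ are indexed by tuples $(i_1, \ldots, i_n) \in Z_2^n$ and given by $\chi_{i_1, \ldots, i_n}((a_1, \ldots, a_n)) = (-1)^{i_1 a_1 + \cdots + i_n a_n}$. Writing a generic element $s \in S$ as $s = (s_1, \ldots, s_n)$ and plugging $\chi_{i_1,\ldots,i_n}$ into the sum from Lemma \ref{lem-1} yields
\[
\lambda_{i_1, i_2, \ldots, i_n} = \sum_{(s_1, s_2, \ldots, s_n) \in S} (-1)^{i_1 s_1 + \cdots + i_n s_n},
\]
which is precisely the claimed expression, with the index tuple $(i_1, \ldots, i_n)$ ranging over $Z_2^n$.

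There is essentially no hard step here; the statement is a direct specialization of Lemma \ref{lem-1} to $Z_2^n$ using the characters supplied by Lemma \ref{lem-2} and equation (\ref{ch-eq-1}). The only point deserving a line of justification is that every subset $S$ of $Z_2^n$ is automatically symmetric (because $-s = s$ for all $s$, as each coordinate lives in $Z_2$), so that Lemma \ref{lem-1} applies without any extra hypothesis on $S$. Beyond that, the proof is a one-line substitution and requires no further computation.
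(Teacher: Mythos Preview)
Your proposal is correct and matches the paper's approach exactly: the paper simply states that the theorem follows ``From (\ref{ch-eq-1}), Lemma \ref{lem-1} and Lemma \ref{lem-2}'' without giving a separate proof, and your write-up just unpacks that one-line justification. Your added remark that every subset of $Z_2^n$ is automatically symmetric is a helpful clarification not made explicit in the paper.
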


Gutman and Mohar \cite{Gutman} and Zhu, Klein et al. \cite{H.Y} obtained the Kirchhoff index of a graph in terms of Laplacian eigenvalues as follows:

\begin{lem}[\cite{Gutman, H.Y}]\label{lem-3}
Let $\Gamma$ be a connected graph with $n\ge 2$ vertices. Then
\[Kf(\Gamma)=n\sum \limits_{i=1}^{n-1}\frac{1}{\mu_{i}},\]
where $0=\mu_{0}<\mu_{1}\le\cdots\le\mu_{n-1}$ are the Laplacian eigenvalues of $\Gamma$.
\end{lem}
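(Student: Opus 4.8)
The plan is to connect the circuit-theoretic definition of resistance distance with the Laplacian spectrum through the Moore--Penrose pseudoinverse $L^{+}$ of $L=L(\Gamma)$. First I would fix an orthonormal eigenbasis $u_{0}=\tfrac{1}{\sqrt{n}}\mathbf{1},u_{1},\ldots,u_{n-1}$ of $L$ with $Lu_{i}=\mu_{i}u_{i}$ and $\mu_{0}=0$; the kernel direction is exactly the span of the all-ones vector $\mathbf{1}$, and it is one-dimensional precisely because $\Gamma$ is connected. Writing $L=\sum_{i=1}^{n-1}\mu_{i}u_{i}u_{i}^{\top}$ gives the spectral form of the pseudoinverse $L^{+}=\sum_{i=1}^{n-1}\mu_{i}^{-1}u_{i}u_{i}^{\top}$, which satisfies $L^{+}\mathbf{1}=0$ and $\operatorname{tr}(L^{+})=\sum_{i=1}^{n-1}\mu_{i}^{-1}$.

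The key analytic step is to express each resistance distance in terms of $L^{+}$, namely $r_{ij}=(e_{i}-e_{j})^{\top}L^{+}(e_{i}-e_{j})=L^{+}_{ii}+L^{+}_{jj}-2L^{+}_{ij}$, where $e_{i},e_{j}$ denote the standard coordinate vectors of $\mathbb{R}^{n}$. I would derive this from Ohm's and Kirchhoff's laws: injecting a unit current at $v_{i}$ and extracting it at $v_{j}$ produces a vector of node potentials $x$ solving $Lx=e_{i}-e_{j}$, and the effective resistance equals the potential difference $x_{i}-x_{j}=(e_{i}-e_{j})^{\top}x$. Since $e_{i}-e_{j}$ is orthogonal to $\mathbf{1}$ and hence lies in the range of $L$, the choice $x=L^{+}(e_{i}-e_{j})$ is an admissible solution, yielding the stated quadratic form.

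With this identity in hand, the summation is routine. Summing over all unordered pairs and symmetrizing (the diagonal terms $r_{ii}=0$ contribute nothing) gives
\[
Kf(\Gamma)=\sum_{i<j}r_{ij}=\frac{1}{2}\sum_{i,j=1}^{n}\bigl(L^{+}_{ii}+L^{+}_{jj}-2L^{+}_{ij}\bigr)=n\operatorname{tr}(L^{+})-\mathbf{1}^{\top}L^{+}\mathbf{1}.
\]
Because $\mathbf{1}$ lies in the kernel of $L^{+}$, the cross term $\mathbf{1}^{\top}L^{+}\mathbf{1}$ vanishes, leaving $Kf(\Gamma)=n\operatorname{tr}(L^{+})=n\sum_{i=1}^{n-1}\mu_{i}^{-1}$, as claimed.

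I expect the main obstacle to be the rigorous justification of the pseudoinverse identity $r_{ij}=(e_{i}-e_{j})^{\top}L^{+}(e_{i}-e_{j})$, since this is the point at which the physical notion of effective resistance must be faithfully translated into linear algebra, including the verification that the potential difference is independent of the particular solution $x$ of $Lx=e_{i}-e_{j}$. Once that identity is secured, the trace computation and the cancellation of $\mathbf{1}^{\top}L^{+}\mathbf{1}$ are purely formal.
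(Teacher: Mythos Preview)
Your argument is correct and follows the standard pseudoinverse route to this classical identity. Note, however, that the paper does not prove this lemma at all: it is quoted as a known result from Gutman--Mohar and Zhu--Klein--Lukovits and used as a black box, so there is no proof in the paper to compare against---your proposal simply supplies a self-contained justification that the paper omits.
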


\begin{lem}[\cite{Godsil}]\label{lem-4}
Let $A$ be the adjacency matrix of a graph $\Gamma$, and let $\rho$ be its spectral radius. Then the following are equivalent:\\
(1) $\Gamma$ is bipartite.\\
(2) The spectrum of $A$ is symmetric about the origin, i.e., for any $\lambda$, the multiplicities of $\lambda$ and $-\lambda$ as eigenvalues of $A$ are the same.\\
(3) $-\rho$ is an eigenvalue of $A$.
\end{lem}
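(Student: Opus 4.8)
The plan is to establish that the three conditions are equivalent by proving the cyclic chain $(1)\Rightarrow(2)\Rightarrow(3)\Rightarrow(1)$, throughout treating $\Gamma$ as connected so that the Perron--Frobenius theorem applies to $A=A(\Gamma)$; this is exactly the setting in which the spectral radius $\rho$ is a simple eigenvalue admitting a strictly positive eigenvector, and (as a disconnected union such as $C_4\cup K_3$ shows) connectedness is genuinely needed for $(3)\Rightarrow(1)$.

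For $(1)\Rightarrow(2)$, I would order the vertices so that the bipartition $V=X\cup Y$ puts $A$ into the block form $A=\begin{pmatrix}0&B\\B^{\top}&0\end{pmatrix}$. Given an eigenvector $\binom{u}{v}$ for an eigenvalue $\lambda$ (so that $Bv=\lambda u$ and $B^{\top}u=\lambda v$), a direct computation gives $A\binom{u}{-v}=\binom{-Bv}{B^{\top}u}=-\lambda\binom{u}{-v}$, so $\binom{u}{-v}$ is an eigenvector for $-\lambda$. Since the linear map $\binom{u}{v}\mapsto\binom{u}{-v}$ is an involution, it restricts to a bijection between the $\lambda$- and $(-\lambda)$-eigenspaces; hence these eigenvalues share the same multiplicity and the spectrum is symmetric about the origin.

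The implication $(2)\Rightarrow(3)$ is immediate: $A$ is real symmetric, so its largest eigenvalue is $\rho$ itself, and symmetry of the spectrum then forces $-\rho$ to be an eigenvalue as well.

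The main obstacle is $(3)\Rightarrow(1)$. Here I would take an eigenvector $y$ with $Ay=-\rho y$ and pass to the vector $|y|$ of entrywise absolute values. From $\rho|y_i|=\big|\sum_j a_{ij}y_j\big|\le\sum_j a_{ij}|y_j|=(A|y|)_i$ one gets $A|y|\ge\rho|y|$ entrywise; feeding this into the variational (Rayleigh-quotient) characterization of $\rho$ forces the inequality to be an equality, so $|y|$ is a $\rho$-eigenvector and, by Perron--Frobenius, is strictly positive. In particular every $y_i\neq0$, so $\sigma(i):=\operatorname{sign}(y_i)\in\{+1,-1\}$ is well defined. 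Equality in the triangle inequality above means that, for each $i$, all terms $a_{ij}y_j$ over neighbours $j$ share a single sign, and since that common sign equals $\operatorname{sign}(-\rho y_i)=-\sigma(i)$, adjacent vertices receive opposite signs. Thus $X=\{i:\sigma(i)=+1\}$ and $Y=\{i:\sigma(i)=-1\}$ form a bipartition in which every edge crosses between the two parts, proving $\Gamma$ bipartite. The delicate point to get right is the passage from $A|y|\ge\rho|y|$ to genuine equality and the extraction of the sign condition from equality in the triangle inequality; both rest on the simplicity and strict positivity of the Perron eigenvector, which is precisely where connectedness enters.
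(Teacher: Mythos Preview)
The paper does not supply its own proof of this lemma: it is quoted as a known result from Godsil and Royle's \emph{Algebraic Graph Theory} and used only as a black box (in the proof of Corollary~\ref{cor-3}). Your argument is correct and is precisely the standard textbook proof, including the appropriate caveat that connectedness is required for the implication $(3)\Rightarrow(1)$.
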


\section{The eigenvalues of enhanced hypercubes}\label{se-3}

In this section, we focus on the eigenvalues of enhanced hypercubes. Let $\{e_{1},\ldots,e_{n}\}$ be the standard basis of $Z_{2}^{n}$, and let $\epsilon_{k}=e_k+e_{k+1}+\cdots+e_n$ for some $1\le k\le n-1$. The $n$-dimensional enhanced hypercube $Q_{n, k}(1\le k\le n-1)$ is just the Cayley graph $Cay(Z_{2}^{n}, S)$, where $Z_{2}^{n}=Z_{2}\times Z_{2}\times\cdots\times Z_{2}$ and $S=\{e_{1},\ldots, e_{n},\epsilon_{k}\}$.

\begin{lem}\label{lem-5}
For  $1\le k\le n-1$, the eigenvalues of $Q_{n, k}$ are given by $\zeta_{t}=n-2t-1$ with multiplicity $\alpha_{t}=\sum\limits_{j=1}^{n}{n-k+1\choose 2j-1}{k-1\choose t-2j+1}$ for  $t=1,2,...,n$, and $\xi_{t}=n-2t+1$ with multiplicity $\beta_{t}=\sum\limits_{j=0}^{n}{n-k+1\choose 2j}{k-1\choose t-2j}$ for $t=0,1,2,...,n$.
\end{lem}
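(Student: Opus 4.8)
The plan is to apply Theorem~\ref{thm-1} directly to the generating set $S=\{e_1,\ldots,e_n,\epsilon_k\}$ and then organize the resulting character sum by a pair of weight parameters. First I would record the two kinds of character values that occur. For a basis vector $e_j$ the exponent $i_1s_1+\cdots+i_ns_n$ collapses to $i_j$, so $\chi_{i_1,\ldots,i_n}(e_j)=(-1)^{i_j}$; for $\epsilon_k=e_k+\cdots+e_n$ the exponent is $i_k+i_{k+1}+\cdots+i_n$, so $\chi_{i_1,\ldots,i_n}(\epsilon_k)=(-1)^{i_k+\cdots+i_n}$. Substituting into Theorem~\ref{thm-1} gives
\[
\lambda_{i_1,\ldots,i_n}=\sum_{j=1}^{n}(-1)^{i_j}+(-1)^{i_k+i_{k+1}+\cdots+i_n}.
\]

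Next I would split the coordinate vector $(i_1,\ldots,i_n)$ into its head $(i_1,\ldots,i_{k-1})$ and tail $(i_k,\ldots,i_n)$, and set $a$ to be the number of $1$'s in the head (so $0\le a\le k-1$), $b$ the number of $1$'s in the tail (so $0\le b\le n-k+1$), and $t=a+b$ the total weight. Since $\sum_{j=1}^{n}(-1)^{i_j}$ counts $0$-entries minus $1$-entries it equals $(n-t)-t=n-2t$, while $(-1)^{i_k+\cdots+i_n}=(-1)^{b}$. Hence
\[
\lambda_{i_1,\ldots,i_n}=n-2t+(-1)^{b}.
\]
This shows the eigenvalue depends on the index vector only through $t$ and the \emph{parity of the tail-weight} $b$: when $b$ is odd we get $\zeta_t=n-2t-1$, and when $b$ is even we get $\xi_t=n-2t+1$. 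Since $b$ odd forces $t\ge 1$ while $b=0$ is admissible, the stated ranges $t=1,\ldots,n$ and $t=0,1,\ldots,n$ follow automatically.

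Finally I would count multiplicities by counting index vectors of each type. The number of vectors with head-weight $a$ and tail-weight $b$ is $\binom{k-1}{a}\binom{n-k+1}{b}$. For fixed $t$ with $b$ odd, writing $b=2j-1$ and $a=t-(2j-1)$ gives
\[
\alpha_t=\sum_{j\ge 1}\binom{n-k+1}{2j-1}\binom{k-1}{t-2j+1},
\]
while for $b$ even, writing $b=2j$ and $a=t-2j$ gives
\[
\beta_t=\sum_{j\ge 0}\binom{n-k+1}{2j}\binom{k-1}{t-2j},
\]
with the usual convention that out-of-range binomial coefficients vanish, which permits the summation index to run up to $n$ as written.

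The computation is essentially routine once the head/tail split is made; the only point requiring genuine care is the bookkeeping, namely verifying that it is the parity of the tail-weight $b$ (not of the total weight $t$) that controls the $\pm1$ contribution of $\epsilon_k$, and checking that the index conventions on the two binomial sums reproduce exactly the stated ranges for $t$. No eigenvalue is double-counted because the events $\{b\text{ odd}\}$ and $\{b\text{ even}\}$ partition all $2^n$ index vectors, so the identity $\sum_t(\alpha_t+\beta_t)=2^n$ provides a useful consistency check on the count.
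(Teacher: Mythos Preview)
Your proof is correct and follows essentially the same approach as the paper: both compute the character sum from Theorem~\ref{thm-1}, observe that the eigenvalue depends only on the total weight $t$ and the weight of the tail block $(i_k,\ldots,i_n)$, and then count index vectors with a fixed total weight and fixed tail-weight parity via a product of binomial coefficients. The only cosmetic difference is notation---the paper writes $r$ for your tail-weight $b$ and packages the count as $|\Lambda(t,r)|$---but the argument is identical.
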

\begin{proof}
Since $Q_{n, k}\cong Cay(Z_2^n, S)$, we need only to calculate the eigenvalues of $Cay(Z_{2}^{n}, S)$. For $v=(v_{1},v_{2},\ldots,v_{n})^T\in Z_2^n$, according to (\ref{ch-eq-1}), the value of the irreducible character  $\chi_{v}$ at $a=(a_1,a_2,...,a_n)^T\in Z_2^n$ is $\chi_{v}(a)=(-1)^{v_{1}a_{1}+v_{2}a_{2}+\cdots+v_{n}a_{n}}$. Thus
\begin{equation}\label{ch-eq-2}
\left\{\begin{array}{ll}\chi_{v}(e_i)=(-1)^{v_{i}}\\
\chi_{v}(\epsilon_k)=(-1)^{v_{k}+v_{k+1}+\cdots+v_n}
\end{array}\right.
\end{equation}
From (\ref{ch-eq-2}) and Theorem \ref{thm-1}, the eigenvalue corresponding to $v=(v_{1},v_{2},\ldots,v_{n})^T\in Z_2^n$ is
\begin{equation}\label{ch-eq-3}\lambda_v=\sum_{s\in S}\chi_v(s)=(-1)^{v_{1}}+(-1)^{v_{2}}+\cdots+(-1)^{v_{n}}+(-1)^{v_{k}+v_{k+1}+\cdots+v_n}
\end{equation}
Given $0\le r\le t\le n$, let
$\Lambda(t, r)=\{v\in Z_2^n\mid v_{1}+v_{2}+\cdots+v_{n}=t, v_{k}+v_{k+1}+\cdots+v_n=r\}$. It is easy to see  that $|\Lambda(t, r)|={n-k+1\choose r}{k-1\choose t-r}$. From (\ref{ch-eq-3}), for any $v\in\Lambda(t, r)$, we have
\begin{equation}\label{ch-eq-4}\lambda_v=\sum_{s\in S}\chi_v(s)=n-2t+(-1)^r\end{equation}
Clearly, $Z_2^n=\cup_{0\le r\le t\le n}\Lambda(t, r)$ is a partition of $Z_2^n$. By taking $r=2j-1$ for $1\le j\le \left\lfloor \frac{t+1}{2}\right\rfloor$, from (\ref{ch-eq-4}) we get the eigenvalue $\zeta_{t}=n-2t-1$, where $t=1,2,...,n$,  with multiplicity $|\Lambda(t, r)|=\sum\limits_{j=1}^{\left\lfloor \frac{t+1}{2}\right\rfloor}{n-k+1\choose 2j-1}{k-1\choose t-2j+1}$ which equals $\alpha_{t}=\sum\limits_{j=1}^{n}{n-k+1\choose 2j-1}{k-1\choose t-2j+1}$;  by taking $r=2j$ for $0\le j\le \left\lfloor \frac{t}{2} \right\rfloor$, we get the eigenvalue $\xi_{t}=n-2t+1$, where $t=0,1,2,...,n$, with multiplicity $|\Lambda(t, r)|=\sum\limits_{j=0}^{\left\lfloor \frac{t}{2} \right\rfloor}{n-k+1\choose 2j}{k-1\choose t-2j}$ which equals $\beta_{t}=\sum\limits_{j=0}^{n}{n-k+1\choose 2j}{k-1\choose t-2j}$.

This completes the proof.
\end{proof}

Note that some of the two kind of eigenvalues described in Lemma \ref{lem-5} may be equal, in fact, $\zeta_t=\xi_{t+1}$ for $t=1,2,...,n-1$. Thus  we can give the spectrum of $Q_{n, k}$ from Lemma \ref{lem-5}.

\begin{thm}\label{thm-2}
Let $1\le k\le n-1$, the spectrum of $Q_{n, k}$ is given by \[\mathrm{Spec}_{A}(Q_{n, k})=\{n+1, [n-1]^{k-1}, [n-2t-1]^{\gamma_t},[-n-1]^{\gamma_n} \mid t=1,2,...,n-1\},\] where $\gamma_t=\sum\limits_{j=0}^{n}{n-k+2\choose 2j}{k-1\choose t+1-2j}$ for $1\le t\le n-1$ and $\gamma_n=\sum\limits_{j=1}^{n}{n-k+1\choose 2j-1}{k-1\choose n-2j+1}$.
\end{thm}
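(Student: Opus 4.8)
The plan is to regroup the $2^{n}$ eigenvalues produced by Lemma \ref{lem-5} according to their numerical values, using the coincidence $\zeta_{t}=\xi_{t+1}$ ($1\le t\le n-1$) recorded just above. Both families $\{\zeta_{t}\}$ and $\{\xi_{t}\}$ run through the same arithmetic progression $n+1,n-1,n-3,\dots,-n-1$ of step $2$, so they overlap in the interior and differ only at the two ends. First I would isolate the unmatched values. The two largest $\xi$-values, $\xi_{0}=n+1$ and $\xi_{1}=n-1$, lie strictly above the largest $\zeta$-value $\zeta_{1}=n-3$, hence receive no contribution from the $\zeta$-family; a direct inspection of $\beta_{0}$ and $\beta_{1}$ shows that the factor $\binom{k-1}{-2j}$ (resp. $\binom{k-1}{1-2j}$) kills every term except $j=0$, giving $\beta_{0}=1$ and $\beta_{1}=k-1$. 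Symmetrically, the smallest $\zeta$-value $\zeta_{n}=-n-1$ lies strictly below the smallest $\xi$-value $\xi_{n}=-n+1$, so it is unmatched and keeps its full multiplicity $\alpha_{n}$, which is precisely the stated $\gamma_{n}$.

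It remains to treat the interior values. For $1\le t\le n-1$ the number $n-2t-1$ equals both $\zeta_{t}$ and $\xi_{t+1}$, so its multiplicity in $\mathrm{Spec}_{A}(Q_{n,k})$ is $\alpha_{t}+\beta_{t+1}$, and the crux is the identity $\alpha_{t}+\beta_{t+1}=\gamma_{t}$. I would prove it by applying Pascal's rule $\binom{n-k+2}{2j}=\binom{n-k+1}{2j}+\binom{n-k+1}{2j-1}$ to the defining sum of $\gamma_{t}$:
\[\gamma_{t}=\sum_{j=0}^{n}\binom{n-k+2}{2j}\binom{k-1}{t+1-2j}=\sum_{j=0}^{n}\binom{n-k+1}{2j}\binom{k-1}{t+1-2j}+\sum_{j=0}^{n}\binom{n-k+1}{2j-1}\binom{k-1}{t+1-2j}.\]
The first sum is exactly $\beta_{t+1}$. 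In the second sum the $j=0$ term vanishes because $\binom{n-k+1}{-1}=0$, and rewriting $t+1-2j=t-2j+1$ turns it into $\sum_{j=1}^{n}\binom{n-k+1}{2j-1}\binom{k-1}{t-2j+1}=\alpha_{t}$. Hence $\gamma_{t}=\alpha_{t}+\beta_{t+1}$, as required.

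Combining the three cases assigns to each value among $n+1,\,n-1,\,n-2t-1\ (1\le t\le n-1),\,-n-1$ the multiplicity claimed, and since the families $\{\zeta_{t}\}$ and $\{\xi_{t}\}$ together exhaust all $2^{n}$ eigenvalues of Lemma \ref{lem-5}, this accounts for every vertex and yields the asserted spectrum. I expect the only real obstacle to be the bookkeeping of the index ranges --- making sure that no interior value is double counted and that the two end values are attached to the correct family --- but the Pascal split above handles this cleanly and simultaneously verifies the multiplicity formula for $\gamma_{t}$.
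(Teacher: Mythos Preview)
Your proposal is correct and follows essentially the same approach as the paper: isolate the unmatched end values $\xi_{0}=n+1$, $\xi_{1}=n-1$, and $\zeta_{n}=-n-1$ with multiplicities $\beta_{0}=1$, $\beta_{1}=k-1$, $\alpha_{n}=\gamma_{n}$, and for the interior values combine $\alpha_{t}+\beta_{t+1}$ via Pascal's rule into $\gamma_{t}$. The only cosmetic difference is that the paper runs the Pascal computation in the direction $\alpha_{t}+\beta_{t+1}\to\gamma_{t}$ whereas you split $\gamma_{t}\to\alpha_{t}+\beta_{t+1}$, but this is the same identity.
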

\begin{proof}
For convenience, denote by $\lambda_0>\lambda_1>\lambda_{2}>\cdots$ the distinct eigenvalues of $Q_{n, k}$. By Lemma \ref{lem-5}, we know that $\lambda_0=\xi_{0}=n+1$ with multiplicity $\beta_{0}=\sum\limits_{j=0}^{n}{n-k+1\choose 2j}{k-1\choose 0-2j}=1$. $\lambda_1=\xi_{1}=n-1$ with multiplicity $\beta_{1}=\sum\limits_{j=0}^{n}{n-k+1\choose 2j}{k-1\choose 1-2j}={k-1\choose 1}=k-1$. In addition, we see that $\lambda_{1+t}=\zeta_t=\xi_{t+1}=n-2t-1$ for $t=1,2,...,n-1$, so the multiplicity of the eigenvalue $n-2t-1$ is $\alpha_{t}+\beta_{t+1}$, where $\gamma_t=\alpha_{t}+\beta_{t+1}$ can be simplified as
\[\begin{array}{lll}
\gamma_{t}&=&\alpha_{t}+\beta_{t+1}=\sum\limits_{j=1}^{n}{n-k+1\choose 2j-1}{k-1\choose t-2j+1}+\sum\limits_{j=0}^{n}{n-k+1\choose 2j}{k-1\choose t-2j+1}\\
&=&\sum\limits_{j=1}^{n}\left({n-k+1\choose 2j-1}+{n-k+1\choose 2j}\right){k-1\choose t-2j+1}+{k-1\choose t+1}\\
&=&\sum\limits_{j=0}^{n}{n-k+2\choose 2j}{k-1\choose t+1-2j}.\\
\end{array}\]
At last, $\lambda_{n+1}=\zeta_n=-n-1$ with multiplicity $\gamma_n=\alpha_{n}=\sum\limits_{j=1}^{n}{n-k+1\choose 2j-1}{k-1\choose n-2j+1}$.

This completes the proof.
\end{proof}
Since the enhanced hypercube $Q_{n, k}$ is regular, the Laplacian spectrum of $Q_{n, k}$ can be reduced by its $A$-spectrum. From Theorem \ref{thm-2}, we get the following result.

\begin{thm}\label{thm-3}
Let $1\le k\le n-1$, the Laplacian spectrum of $Q_{n, k}$ is given by
\[\mathrm{Spec}_{L}(Q_{n, k})=\{0,[2]^{k-1}, [2t+2]^{\gamma_{t}},[2n+2]^{\gamma_n} \mid t=1,2,...,n-1\},\]
where $\gamma_t=\sum\limits_{j=0}^{n}{n-k+2\choose 2j}{k-1\choose t+1-2j}$ for $1\le t\le n-1$ and $\gamma_n=\sum\limits_{j=1}^{n}{n-k+1\choose 2j-1}{k-1\choose n-2j+1}$.

\end{thm}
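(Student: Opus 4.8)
The plan is to exploit the regularity of $Q_{n, k}$ rather than recompute any eigenvalues from scratch. Recall from Section~\ref{se-1} that $Q_{n, k}$ is an $(n+1)$-regular graph, so its degree matrix is the scalar matrix $D(Q_{n, k})=(n+1)I$. Consequently the Laplacian and adjacency matrices are tied together by the elementary identity $L(Q_{n, k})=D(Q_{n, k})-A(Q_{n, k})=(n+1)I-A(Q_{n, k})$, and it is this relation that I would use to deduce the Laplacian spectrum directly from the adjacency spectrum of Theorem~\ref{thm-2}.

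The key point is that subtracting $A(Q_{n, k})$ from a scalar multiple of the identity leaves all eigenvectors unchanged and merely transforms the eigenvalues by an affine map. Indeed, if $x$ is an eigenvector of $A(Q_{n, k})$ with $A(Q_{n, k})x=\lambda x$, then $L(Q_{n, k})x=(n+1)x-\lambda x=(n+1-\lambda)x$, so $x$ is an eigenvector of $L(Q_{n, k})$ with eigenvalue $n+1-\lambda$. Since $A(Q_{n, k})$ and $L(Q_{n, k})$ share every eigenspace, each adjacency eigenvalue $\lambda$ corresponds to the Laplacian eigenvalue $n+1-\lambda$ with exactly the same multiplicity. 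Thus the whole argument reduces to applying the substitution $\lambda\mapsto(n+1)-\lambda$ to the list supplied by Theorem~\ref{thm-2}.

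Carrying out the four substitutions: the adjacency eigenvalue $n+1$ (multiplicity $1$) becomes $0$; the eigenvalue $n-1$ of multiplicity $k-1$ becomes $2$ of multiplicity $k-1$; each eigenvalue $n-2t-1$ of multiplicity $\gamma_t$ becomes $2t+2$ of multiplicity $\gamma_t$ for $t=1,\ldots,n-1$; and the eigenvalue $-n-1$ of multiplicity $\gamma_n$ becomes $2n+2$ of multiplicity $\gamma_n$. Because the multiplicities $\gamma_t$ and $\gamma_n$ are precisely those recorded in Theorem~\ref{thm-2}, this recovers exactly the claimed $\mathrm{Spec}_{L}(Q_{n, k})$. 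I do not anticipate any genuine obstacle here: the entire proof is the standard reduction valid for any regular graph, and the only thing that needs verification is the trivial arithmetic of the four affine substitutions, together with the already-established fact that $Q_{n, k}$ has constant degree $n+1$.
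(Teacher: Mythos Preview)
Your proposal is correct and follows precisely the paper's own approach: the paper also notes that $Q_{n,k}$ is $(n+1)$-regular, writes $L(Q_{n,k})=(n+1)I-A(Q_{n,k})$, observes that each adjacency eigenvector with eigenvalue $\lambda$ becomes a Laplacian eigenvector with eigenvalue $(n+1)-\lambda$, and then invokes Theorem~\ref{thm-2}. Your version merely spells out the four affine substitutions explicitly, which the paper leaves implicit.
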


\begin{proof}
Since $Q_{n, k}$ is $n+1$ regular, we have $L(Q_{n, k})=(n+1)I-A(Q_{n, k})$. Thus every eigenvector of $A(Q_{n, k})$ with eigenvalue $\lambda$ is an eigenvector of $L(Q_{n, k})$  with eigenvalue $(n+1)-\lambda$. By Theorem \ref{thm-2}, the result yields immediately.
\end{proof}

In general, the summations of $\alpha_{t}$ and $\beta_{t}$ can not be simply calculated. However, we can find the simple expression of the spectrum for some special $k$. By Theorem \ref{thm-2} and Theorem \ref{thm-3}, we give the $A$-spectrum and Laplacian spectrum of $Q_{n, k}$ for $k=1$.

\begin{cor}\label{cor-1}
For $k=1$ and $n\ge2$, the $A$-spectrum and Laplacian spectrum of $Q_{n, 1}$ is given by
\begin{itemize}
\item[(1)] $ \mathrm{Spec}_{A}(Q_{n,1})=\left\{\begin{array}{ll}
\{n+1, [n-3]^{\binom{n+1}{2}},\ldots,[-n+3]^{\binom{n+1}{n-1}},-n-1\} &\mbox{if $n$ is odd}\\
\{n+1, [n-3]^{\binom{n+1}{2}},\ldots,[-n+5]^{\binom{n+1}{n-2}},[-n+1]^{n+1}\} &\mbox{if $n$ is even}.
\end{array}
\right. $
\item[(2)] $ \mathrm{Spec}_{L}(Q_{n,1})=\left\{\begin{array}{ll}
\{0, [4]^{\binom{n+1}{2}},[8]^{\binom{n+1}{4}},\ldots,[2n-2]^{\binom{n+1}{n-1}},2n+2\} &\mbox{if $n$ is odd}\\
\{0, [4]^{\binom{n+1}{2}},[8]^{\binom{n+1}{4}},\ldots,[2n-4]^{\binom{n+1}{n-2}},[2n]^{n+1}\} &\mbox{if $n$ is even}.
\end{array}
\right.$
\end{itemize}
\end{cor}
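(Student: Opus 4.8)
The plan is to obtain Corollary \ref{cor-1} as a direct specialization of Theorem \ref{thm-2} and Theorem \ref{thm-3} to the case $k=1$; the only real work is evaluating the binomial sums for the multiplicities once $k-1=0$.

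First I would substitute $k=1$ into the spectrum of Theorem \ref{thm-2}. The crucial observation is that $\binom{k-1}{m}=\binom{0}{m}$ equals $1$ when $m=0$ and $0$ otherwise. This immediately removes the block $[n-1]^{k-1}=[n-1]^{0}$, and it collapses each multiplicity sum to a single surviving term. Concretely, in
\[\gamma_t=\sum_{j=0}^{n}\binom{n+1}{2j}\binom{0}{t+1-2j},\]
the factor $\binom{0}{t+1-2j}$ is nonzero only for $2j=t+1$, which has an integer solution precisely when $t$ is odd; hence $\gamma_t=\binom{n+1}{t+1}$ for odd $t$ and $\gamma_t=0$ for even $t$. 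Likewise $\gamma_n=\sum_{j=1}^{n}\binom{n}{2j-1}\binom{0}{n-2j+1}$ forces $2j-1=n$, so $\gamma_n=\binom{n}{n}=1$ when $n$ is odd and $\gamma_n=0$ when $n$ is even.

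Next I would assemble the surviving eigenvalues according to the parity of $n$. For odd $t$ the eigenvalue $n-2t-1$ survives with multiplicity $\binom{n+1}{t+1}$, so letting $t$ run over $1,3,5,\ldots$ produces $n-3,n-7,\ldots$ with multiplicities $\binom{n+1}{2},\binom{n+1}{4},\ldots$. When $n$ is odd, the odd values of $t$ stop at $t=n-2$ (giving eigenvalue $-n+3$ with multiplicity $\binom{n+1}{n-1}$) and the eigenvalue $-n-1$ appears once from $\gamma_n=1$; when $n$ is even, the odd values of $t$ run up to $t=n-1$ (giving eigenvalue $-n+1$ with multiplicity $\binom{n+1}{n}=n+1$) and $-n-1$ does not occur since $\gamma_n=0$. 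This yields exactly the two cases of part (1).

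Finally, part (2) follows with no extra effort from Theorem \ref{thm-3}: since $Q_{n,1}$ is $(n+1)$-regular, each adjacency eigenvalue $\lambda$ becomes the Laplacian eigenvalue $(n+1)-\lambda$ with the same multiplicity, turning $n+1,n-3,n-7,\ldots$ into $0,4,8,\ldots$ and the endpoints $-n-1,-n+1$ into $2n+2,2n$. The only point requiring care, and the one place an off-by-one error could creep in, is the bookkeeping at the two endpoints of the list, namely checking that $-n-1$ is present exactly when $n$ is odd and identifying the correct final multiplicity ($\binom{n+1}{n-1}$ versus $n+1$) in each parity case; everything else is mechanical.
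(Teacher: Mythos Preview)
Your proposal is correct and follows exactly the approach the paper intends: the paper states Corollary \ref{cor-1} without proof, simply as a direct specialization of Theorem \ref{thm-2} and Theorem \ref{thm-3} to $k=1$. Your computation of $\gamma_t$ and $\gamma_n$ via the collapse of $\binom{0}{m}$, together with the parity bookkeeping at the endpoints, is precisely the mechanical verification that the paper leaves implicit.
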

\begin{remark}
The folded hypercube $FQ_{n}$ introduced in \cite{Chen} is just the enhanced hypercube $Q_{n, k}$ for $k=1$, i.e., $FQ_{n}=Q_{n, 1}$. Thus Corollary \ref{cor-1} gives the $A$-spectrum and Laplacian spectrum of the folded hypercube $FQ_{n}$.
\end{remark}

We give the following two Corollaries at the last of this section, which can be regarded as an application of Theorem \ref{thm-2} .
Denote  the adjacency eigenvalues of $\Gamma$ by $\lambda_{1}\ge \lambda_{2}\ge\cdots\ge\lambda_{n}$, and $\lambda_{1}-\lambda_{2}$ is the so called \emph{spectral gap} of the adjacency matrix, which is also an important spectral parameter related to the expander property \cite{Shlomo}.
\begin{cor}\label{cor-2}
For $2\le k\le n-1$, the second largest eigenvalue of $Q_{n,k}$ is $n-1$, and so the adjacency spectral gap of $Q_{n, k}$ is 2. Especially, the second largest eigenvalue of $Q_{n,1}$ is $n-3$, the corresponding adjacency spectral gap of $Q_{n, 1}$ is 4.
\end{cor}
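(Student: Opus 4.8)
The plan is to read off the second largest adjacency eigenvalue directly from the explicit spectrum recorded in Theorem \ref{thm-2}, treating the cases $2\le k\le n-1$ and $k=1$ separately. Recall from the proof of Theorem \ref{thm-2} that the distinct adjacency eigenvalues of $Q_{n,k}$, listed in decreasing order, begin with $n+1$ (the spectral radius, of multiplicity $1$), followed by $n-1$ of multiplicity $k-1$, and then the values $n-2t-1$ for $t=1,2,\ldots,n-1$ of multiplicity $\gamma_t$, terminating with $-n-1$. Since $Q_{n,k}$ is $(n+1)$-regular and connected, $\lambda_1=n+1$ is simple, so the whole question reduces to deciding which listed value is the largest one strictly below $n+1$.

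For $2\le k\le n-1$ I would argue as follows. Here $k-1\ge 1$, so the value $n-1$ genuinely occurs in the spectrum with positive multiplicity $k-1$. Every remaining eigenvalue has the form $n-2t-1$ with $t\ge 1$, hence is at most $n-3<n-1$. Consequently $n-1$ is the largest eigenvalue below $n+1$, i.e.\ $\lambda_2=n-1$, and the adjacency spectral gap equals $(n+1)-(n-1)=2$.

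For $k=1$ the subtlety is that the multiplicity $k-1$ of $n-1$ is zero, so $n-1$ disappears from the spectrum; one must therefore descend to the next candidate $n-3$, obtained by setting $t=1$ in $n-2t-1$, and verify that it actually occurs. Substituting $k=1$ and $t=1$ into $\gamma_t=\sum_{j=0}^{n}\binom{n-k+2}{2j}\binom{k-1}{t+1-2j}$ leaves only the term $j=1$ (since $\binom{0}{2-2j}\ne 0$ forces $2-2j=0$), giving $\gamma_1=\binom{n+1}{2}>0$ for $n\ge 2$. Hence $n-3$ is indeed present, it is the largest value below $n+1$, and the spectral gap is $(n+1)-(n-3)=4$, in agreement with Corollary \ref{cor-1}.

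The only genuine point requiring care is the positivity of the relevant multiplicity in the boundary case $k=1$, together with confirming that $n-1$ really is absent there; once Theorem \ref{thm-2} is in hand no further computation is needed, since the monotone decrease of the listed eigenvalues makes the identification of $\lambda_2$ immediate.
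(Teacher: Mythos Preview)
Your proposal is correct and follows essentially the same approach as the paper: both arguments simply read the second largest eigenvalue off the explicit spectrum in Theorem~\ref{thm-2}, distinguishing $k\ge 2$ (where $n-1$ has multiplicity $k-1\ge 1$) from $k=1$ (where that multiplicity vanishes and one drops to $n-3$). Your version is in fact more careful than the paper's, since you explicitly verify $\gamma_1=\binom{n+1}{2}>0$ for $k=1$, a point the paper leaves implicit.
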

\begin{proof}
From Theorem \ref{thm-2}, we known that $n+1$ is the first largest eigenvalue of $Q_{n, k}$. Moreover, for $k\ge2$, the second largest eigenvalue is $n-1$, for $k=1$, the second largest eigenvalue of $Q_{n, 1}$ is $n-3$.
\end{proof}
H. M. Liu in \cite{Liu-3} characterized the bipartite graphs among enhanced hypercubes $Q_{n, k}$. Here we give it another simple  proof by the spectrum of $Q_{n, k}$.
\begin{cor}\label{cor-3}
$Q_{n, k}$ is a bipartite graph if and only if $n$ and $k$ have the same parity for $1\le k\le n-1$.
\end{cor}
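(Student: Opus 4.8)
My plan is to invoke the spectral characterization of bipartiteness recorded in Lemma \ref{lem-4}. Since $Q_{n,k}$ is $(n+1)$-regular and connected, its spectral radius is $\rho=n+1$, realized by the eigenvalue $\lambda_0=n+1$ listed in Theorem \ref{thm-2}. By part (3) of Lemma \ref{lem-4}, $Q_{n,k}$ is bipartite if and only if $-\rho=-n-1$ is also an eigenvalue. So the entire question reduces to deciding exactly when $-n-1$ belongs to $\mathrm{Spec}_A(Q_{n,k})$, and the parity condition on $n$ and $k$ should emerge from that decision.

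I would first read off from Theorem \ref{thm-2} that $-n-1$ occurs with multiplicity $\gamma_n=\sum_{j=1}^{n}\binom{n-k+1}{2j-1}\binom{k-1}{n-2j+1}$, so that $Q_{n,k}$ is bipartite precisely when $\gamma_n\neq 0$. The cleanest route I see, however, is to go back to the eigenvalue description inside the proof of Lemma \ref{lem-5}: by \eqref{ch-eq-4} the eigenvalue attached to any $v\in\Lambda(t,r)$ equals $n-2t+(-1)^r$, and only the extreme layer $t=n$ can possibly produce $-n-1$. But $v_1+\cdots+v_n=n$ forces $v=(1,1,\ldots,1)$, the unique all-ones vector, for which $r=v_k+\cdots+v_n=n-k+1$; its eigenvalue is therefore $-n+(-1)^{n-k+1}$.

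This would immediately settle both directions: the all-ones vector contributes $-n-1$ if and only if $(-1)^{n-k+1}=-1$, i.e. $n-k+1$ is odd, i.e. $n-k$ is even, which is exactly the statement that $n$ and $k$ have the same parity. In the opposite-parity case the all-ones vector yields $-n+1$ instead, and no other vector can reach $-n-1$, so $-\rho$ is not an eigenvalue and Lemma \ref{lem-4} rules out bipartiteness.

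I expect no serious obstacle here; the only point needing a line of care is confirming that no smaller layer $t<n$ can also generate $-n-1$, which follows since $n-2t+(-1)^r\ge n-2t-1>-n-1$ whenever $t<n$. Equivalently, one may verify directly that the sum defining $\gamma_n$ collapses: the support conditions $2j-1\le n-k+1$ and $n-2j+1\le k-1$ pin $j$ to the single value $(n-k+2)/2$, which is an integer exactly when $n\equiv k\pmod 2$, giving $\gamma_n=\binom{n-k+1}{n-k+1}\binom{k-1}{k-1}=1$ in that case and $\gamma_n=0$ otherwise. Either phrasing completes the equivalence.
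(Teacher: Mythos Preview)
Your proof is correct, and the second phrasing you offer---collapsing $\gamma_n$ via the support constraints $2j-1\le n-k+1$ and $n-2j+1\le k-1$ to the single value $j=(n-k+2)/2$---is exactly the argument the paper gives. Your first phrasing, reading off the eigenvalue of the all-ones vector directly from \eqref{ch-eq-4}, is a slightly more transparent variant that bypasses the combinatorial sum entirely, but the two are really the same observation dressed differently.
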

\begin{proof}
By Lemma \ref{lem-3}, $Q_{n, k}$ is a bipartite graph if and only if $-n-1$ is an eigenvalue with multiplicity one, by theorem \ref{thm-1}, if and only if $\gamma_{n}=\sum\limits_{j=1}^{n}{n-k+1\choose 2j-1}{k-1\choose n-2j+1}=1$.  Note that the valid term in the summation of $\gamma_n$ must satisfy $n-k+1\ge2j-1$ and $k-1\ge n-2j+1$. Hence $n-k+1=2j-1$, that is, $j=\frac{n-k+2}{2}$. So we have $\gamma_{n}=\sum\limits_{j=1}^{n}{n-k+1\choose 2j-1}{k-1\choose n-2j+1}={n-k+1\choose n-k+1}{k-1\choose k-1}=1$ if  $n$ and $k$ have the same parity and $\gamma_{n}=0$ otherwise. It immediately follows our result.
\end{proof}

\section{The Kirchhoff index of enhanced hypercubes}\label{se-4}

In this section, we focus on our main results. First we give the formula of the Kirchhoff index of the enhanced hypercube $Q_{n, k}$ where $1\le k\le n-1$. Next we will show the monotonicity of $Kf(Q_{n, k})$. At last, we get a  limiting function for $Kf(Q_{n, k})$.

\begin{thm}\label{thm-4}
Let $1\le k\le n-1$. The Kirchhoff index of the enhanced hypercube $Q_{n, k}$ is given by
\[Kf(Q_{n, k})=\left\{\begin{array}{ll}
2^{n-1}\sum\limits_{t=0}^{n}\sum\limits_{j=0}^{n}\frac{1}{t+1}{n-k+2\choose 2j}{k-1\choose t+1-2j} &\mbox{if $n\equiv k~(\mathrm{mod}~2)$}\\
2^{n-1}\sum\limits_{t=0}^{n-1}\sum\limits_{j=0}^{n}\frac{1}{t+1}{n-k+2\choose 2j}{k-1\choose t+1-2j} &\mbox{if $n\not\equiv k~(\mathrm{mod}~2)$}.
\end{array}\right.\]
\end{thm}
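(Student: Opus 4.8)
The plan is to feed the explicit Laplacian spectrum of Theorem \ref{thm-3} into the eigenvalue formula for the Kirchhoff index in Lemma \ref{lem-3}. Since $Q_{n,k}$ has exactly $2^n$ vertices, Lemma \ref{lem-3} reads $Kf(Q_{n,k})=2^n\sum_i 1/\mu_i$, where the sum ranges over all nonzero Laplacian eigenvalues listed with multiplicity. Reading these off Theorem \ref{thm-3}---namely $2$ with multiplicity $k-1$, then $2t+2$ with multiplicity $\gamma_t$ for $1\le t\le n-1$, and finally $2n+2$ with multiplicity $\gamma_n$---and pulling out a factor $2^n\cdot\tfrac12=2^{n-1}$ yields
\begin{equation}\label{plan-eq-1}
Kf(Q_{n,k})=2^{n-1}\left[(k-1)+\sum_{t=1}^{n-1}\frac{\gamma_t}{t+1}+\frac{\gamma_n}{n+1}\right].
\end{equation}

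Second, I would recast the three pieces of the bracket as a single double sum by extending the defining expression of $\gamma_t$ to all indices. Put $\Phi(t)=\sum_{j=0}^{n}\binom{n-k+2}{2j}\binom{k-1}{t+1-2j}$, so that $\Phi(t)=\gamma_t$ already holds for $1\le t\le n-1$. A one-line check of the binomial supports shows that the only surviving term of $\Phi(0)$ is $j=0$, giving $\Phi(0)=\binom{k-1}{1}=k-1$; hence the isolated constant $k-1$ in (\ref{plan-eq-1}) is exactly the $t=0$ term $\Phi(0)/(0+1)$ of the intended sum. This is the small repackaging step that turns the bracket into $\sum_{t=0}^{\,\cdot}\Phi(t)/(t+1)$, and it is the only genuinely non-mechanical move in the argument.

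Third, I would dispose of the top term using the parity analysis already carried out in Corollary \ref{cor-3}. There $\gamma_n$---the multiplicity of the extreme eigenvalue---was shown to equal $1$ when $n\equiv k\pmod 2$ and $0$ otherwise, via the observation that a nonzero summand forces $2j-1=n-k+1$. The identical support constraint applied to $\Phi(n)$ forces $2j=n-k+2$, so $\Phi(n)=1$ when $n\equiv k\pmod 2$ and $\Phi(n)=0$ otherwise; thus $\gamma_n=\Phi(n)$ in both parities. Substituting into (\ref{plan-eq-1}) then splits into the two advertised cases: when $n\equiv k\pmod 2$ the term $\gamma_n/(n+1)=\Phi(n)/(n+1)$ completes the sum up to $t=n$, whereas when $n\not\equiv k\pmod 2$ it vanishes and the sum stops at $t=n-1$. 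Reinstating the double-sum form of each $\Phi(t)$ gives precisely the two displayed formulas. The whole argument is essentially bookkeeping; the only point requiring care is aligning the two different expressions for $\gamma_n$ (the one appearing in Theorem \ref{thm-3} versus the uniform $\Phi$-form) and confirming they agree, which the parity computation above settles.
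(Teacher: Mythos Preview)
Your proposal is correct and follows essentially the same route as the paper: both start from Lemma~\ref{lem-3} and Theorem~\ref{thm-3} to reach the intermediate expression $2^{n-1}\bigl[(k-1)+\sum_{t=1}^{n-1}\gamma_t/(t+1)+\gamma_n/(n+1)\bigr]$, then absorb the constant $k-1$ as the $t=0$ term of the double sum and handle the $t=n$ term via the parity computation of Corollary~\ref{cor-3}. Your explicit verification that $\gamma_n=\Phi(n)$ (reconciling the two different formulas for $\gamma_n$) is a small clarification the paper leaves implicit, but otherwise the arguments coincide.
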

\begin{proof}
We denote the Laplacian eigenvalues of $Q_{n, k}$ by $\mu_{i}$ for $i=0,1,...,2^n-1$. By Lemma \ref{lem-3} and Theorem \ref{thm-3} we have
\[\begin{array}{lll}
Kf(Q_{n, k})&=&\displaystyle 2^{n}\sum\limits_{i=1}^{2^{n}-1}\frac{1}{\mu_{i}}=2^{n}\sum\limits_{\mu\in \mathrm{Spec}_{L}(Q_{n, k})\backslash 0 }\frac{1}{\mu}\\
&=&2^{n}\left(\frac{k-1}{2}+\frac{\gamma_{n}}{2n+2}+\sum\limits_{t=1}^{n-1}\frac{\gamma_{t}}{2t+2}\right)\\
&=&2^{n-1}\left(k-1+\frac{\gamma_n}{n+1}+\sum\limits_{t=1}^{n-1}\frac{\gamma_{t}}{t+1}\right).
\end{array}\]
By Corollary \ref{cor-3}, we known that  $\gamma_{n}=1$ if  $n$ and $k$ have the same parity, and $\gamma_{n}=0$ otherwise. By Theorem \ref{thm-2}, we have $\gamma_{t}=\sum\limits_{j=0}^{n}{n-k+2\choose 2j}{k-1\choose t+1-2j}$ for $t=1,2,...,n-1$. Therefore, we have
\begin{equation}\label{Q-eq-1}Kf(Q_{n, k})=\left\{\begin{array}{ll}
2^{n-1}\left(k-1+\frac{1}{n+1}+\sum\limits_{t=1}^{n-1}\sum\limits_{j=0}^{n}\frac{1}{t+1}{n-k+2\choose 2j}{k-1\choose t+1-2j}\right)&\mbox{if $n\equiv k~(\mathrm{mod}~2)$}\\
2^{n-1}\left(k-1+\sum\limits_{t=1}^{n-1}\sum\limits_{j=0}^{n}\frac{1}{t+1}{n-k+2\choose 2j}{k-1\choose t+1-2j}\right) &\mbox{if $n\not\equiv k~(\mathrm{mod}~2)$}.
\end{array}\right.\end{equation}
It is easy to verify that  $\sum\limits_{j=0}^{n}\frac{1}{t+1}{n-k+2\choose 2j}{k-1\choose t+1-2j}=k-1$ if $t=0$. On the other hand, if $t=n$, we have $\sum\limits_{j=0}^{n}\frac{1}{t+1}{n-k+2\choose 2j}{k-1\choose t+1-2j}=\frac{1}{n+1}\sum\limits_{j=0}^{n}{n-k+2\choose 2j}{k-1\choose n+1-2j}=\frac{1}{n+1}$ for $n\equiv k~(\mathrm{mod}~2)$. Thus (\ref{Q-eq-1}) becomes
\[Kf(Q_{n, k})=\left\{\begin{array}{ll}
2^{n-1}\sum\limits_{t=0}^{n}\sum\limits_{j=0}^{n}\frac{1}{t+1}{n-k+2\choose 2j}{k-1\choose t+1-2j} &\mbox{if $n\equiv k~(\mathrm{mod}~2)$}\\
2^{n-1}\sum\limits_{t=0}^{n-1}\sum\limits_{j=0}^{n}\frac{1}{t+1}{n-k+2\choose 2j}{k-1\choose t+1-2j} &\mbox{if $n\not\equiv k~(\mathrm{mod}~2)$}.
\end{array}\right.\]

This completes the proof.
\end{proof}

Theorem \ref{thm-4} gives an exact formula of the Kirchhoff index of $Q_{n, k}$. However, its representation has some complicated. Now we give a lemma that will be used to simplify the Kirchhoff index of $Q_{n, k}$ for some $k$.

\begin{lem}\label{lem-6}
For a positive integer $n$, we have $\sum\limits_{i=1}^{n}\frac{1}{2i}{n+1\choose 2i}=\sum\limits_{s=1}^{n}\frac{2^{s}-1}{s+1}$.
\end{lem}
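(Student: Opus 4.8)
The plan is to prove the identity by induction on $n$ (equivalently, by telescoping a single difference). Write $L_n=\sum_{i=1}^{n}\frac{1}{2i}\binom{n+1}{2i}$ and $R_n=\sum_{s=1}^{n}\frac{2^s-1}{s+1}$, and set $L_0=R_0=0$. The base case $n=1$ is immediate, since $L_1=\frac12\binom{2}{2}=\frac12=\frac{2^1-1}{2}=R_1$. Because $R_n-R_{n-1}=\frac{2^n-1}{n+1}$, it suffices to establish the matching difference on the left, namely $L_n-L_{n-1}=\frac{2^n-1}{n+1}$; summing these differences then yields the full identity.

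First I would evaluate $L_n-L_{n-1}$ using Pascal's rule. Writing $\binom{n+1}{2i}=\binom{n}{2i}+\binom{n}{2i-1}$ and noting that the extra term $\binom{n}{2n}$ vanishes, the $\binom{n}{2i}$ contributions reconstruct $L_{n-1}$ exactly, leaving
\[
L_n-L_{n-1}=\sum_{i\ge 1}\frac{1}{2i}\binom{n}{2i-1}.
\]
The crux is then the factorial identity $\frac{1}{2i}\binom{n}{2i-1}=\frac{1}{n+1}\binom{n+1}{2i}$, which follows immediately from $\binom{n+1}{2i}=\frac{n+1}{2i}\binom{n}{2i-1}$. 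Substituting this converts the difference into $\frac{1}{n+1}\sum_{i\ge 1}\binom{n+1}{2i}$, and the even-index binomial sum evaluates as $\sum_{i\ge 1}\binom{n+1}{2i}=2^{(n+1)-1}-\binom{n+1}{0}=2^n-1$. Hence $L_n-L_{n-1}=\frac{2^n-1}{n+1}=R_n-R_{n-1}$, completing the induction.

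I expect no serious obstacle here: the only points demanding care are the bookkeeping of summation ranges (extending or truncating sums by vanishing terms, e.g.\ that $\binom{n}{2n}=0$ and that the $i=0$ term $\binom{n+1}{0}$ must be excluded) and the recognition of the single factorial identity that collapses $\frac{1}{2i}\binom{n}{2i-1}$ into a binomial coefficient of $n+1$. An alternative, purely generating-function route is also available: one may integrate $\frac{1}{t}\bigl(\tfrac12[(1+t)^{n+1}+(1-t)^{n+1}]-1\bigr)$ over $[0,1]$ to produce $L_n$ directly. However, the inductive/telescoping computation above is shorter and avoids evaluating an integral, so that is the route I would take.
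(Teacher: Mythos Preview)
Your proof is correct and relies on exactly the same two identities the paper uses: Pascal's rule $\binom{n+1}{2i}=\binom{n}{2i}+\binom{n}{2i-1}$ and the factorial identity $\frac{1}{2i}\binom{n}{2i-1}=\frac{1}{n+1}\binom{n+1}{2i}$, followed by the even-index sum $\sum_{i\ge 1}\binom{n+1}{2i}=2^{n}-1$. The only difference is organizational: you package the argument as an induction computing $L_n-L_{n-1}$, whereas the paper iterates the same decomposition all the way down inside a single sum (obtaining $\sum_{i}\sum_{r}\frac{1}{n+1-r}\binom{n+1-r}{2i}$), swaps the order of summation, and then evaluates---which is precisely your telescoping written out in one line.
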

\begin{proof}
By the fundamental identity of combination ${n-1\choose 2i-1}+{n-1\choose 2i}={n\choose 2i}$ and $\frac{1}{2i}{n-1\choose 2i-1}=\frac{1}{n}{n\choose 2i}$, we get
\[\begin{array}{lll}
\sum\limits_{i=1}^{n}\frac{1}{2i}{n+1\choose 2i}&=&\sum\limits_{i=1}^{n}\left(\frac{1}{2i}{n\choose2i-1}+\frac{1}{2i}{n\choose 2i}\right)=\sum\limits_{i=1}^{n}\left(\frac{1}{n+1}{n+1\choose2i}+\frac{1}{2i}{n\choose 2i}\right)\\
&=&\sum\limits_{i=1}^{n}\left(\frac{1}{n+1}{n+1\choose2i}+\frac{1}{2i}{n-1\choose 2i-1}+\frac{1}{2i}{n-1\choose2i}\right)\\
&=&\sum\limits_{i=1}^{n}\left(\frac{1}{n+1}{n+1\choose2i}+\frac{1}{n}{n\choose 2i}+\frac{1}{2i}{n-2\choose2i-1}+\frac{1}{2i}{n-2\choose2i}\right)\\
&=&\sum\limits_{i=1}^{n}\left[\frac{1}{n+1}{n+1\choose2i}+\frac{1}{n}{n\choose 2i}+\cdots+\frac{1}{n-(n-2i)}{n-(n-2i)\choose2i}+\frac{1}{2i}{n-(n-2i)-1\choose2i}\right]\\
&=&\sum\limits_{i=1}^{n}\sum\limits_{r=0}^{n}\frac{1}{n+1-r}{n+1-r\choose2i}=\sum\limits_{r=0}^{n}\frac{1}{n+1-r}\sum\limits_{i=1}^{n}{n+1-r\choose2i}\\
&=&\sum\limits_{r=0}^{n}\frac{1}{n+1-r}(2^{n-r}-1)=\sum\limits_{s=1}^{n}\frac{2^{s}-1}{s+1}\\
\end{array}\]
This completes the proof.
\end{proof}

By Lemma \ref{lem-6}, we arrive at the following result, which is a special case of Theorem \ref{thm-4}. In fact, the formula of Kirchhoff index of the folded hypercube $FQ_n$ have been presented in \cite{Liu-4}. Corollary \ref{cor-4} simplifies the formula.

\begin{cor}\label{cor-4}
The Kirchhoff index of the enhanced hypercube $Q_{n, 1}$ is given by
\[\begin{array}{ll}Kf(Q_{n, 1})=2^{n-1}\sum\limits_{t=1}^{n}\frac{2^{t}-1}{t+1}\end{array}.\]
\end{cor}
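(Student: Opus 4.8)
Corollary \ref{cor-4} specializes Theorem \ref{thm-4} to the case $k=1$ and then invokes Lemma \ref{lem-6} to collapse the double sum into the clean single sum $2^{n-1}\sum_{t=1}^{n}\frac{2^{t}-1}{t+1}$. So my plan is to start from the exact formula in Theorem \ref{thm-4}, substitute $k=1$, and watch the binomial factor $\binom{k-1}{t+1-2j}=\binom{0}{t+1-2j}$ kill almost every term.

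First I would note that with $k=1$ the factor $\binom{0}{t+1-2j}$ vanishes unless $t+1-2j=0$, i.e.\ unless $j=\frac{t+1}{2}$, which forces $t$ to be odd, and then $\binom{0}{0}=1$. Writing $t=2i-1$ (so $j=i$), the inner $j$-sum reduces to the single surviving term $\frac{1}{t+1}\binom{n+1}{2i}=\frac{1}{2i}\binom{n+1}{2i}$, using $n-k+2=n+1$. Next I would handle the parity split in Theorem \ref{thm-4}: for $k=1$ the two cases are $n$ odd (then $n\equiv k\pmod 2$, so $t$ ranges over $0,\dots,n$) and $n$ even (then $t$ ranges over $0,\dots,n-1$). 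In either case the only contributing $t$ are the odd ones, namely $t=1,3,\dots$ up to the largest odd integer $\le n$, and substituting $t=2i-1$ makes $i$ run over $1,\dots,\lfloor\frac{n+1}{2}\rfloor=\lfloor\frac{n}{2}\rfloor$ when $n$ even and $\frac{n+1}{2}$ when $n$ odd. Thus both parity cases merge into the single expression
\[
Kf(Q_{n,1})=2^{n-1}\sum_{i=1}^{n}\frac{1}{2i}\binom{n+1}{2i},
\]
where the upper limit may be written as $n$ since $\binom{n+1}{2i}=0$ once $2i>n+1$.

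Finally I would apply Lemma \ref{lem-6}, which states exactly $\sum_{i=1}^{n}\frac{1}{2i}\binom{n+1}{2i}=\sum_{s=1}^{n}\frac{2^{s}-1}{s+1}$, to rewrite the bracketed sum and obtain $Kf(Q_{n,1})=2^{n-1}\sum_{t=1}^{n}\frac{2^{t}-1}{t+1}$ after renaming the index $s$ to $t$. The one point demanding care, which I regard as the main obstacle, is verifying that the parity cases of Theorem \ref{thm-4} genuinely collapse to the same bound; in particular, for $n$ odd one must confirm that the extra term $t=n$ (allowed only in the $n\equiv k$ branch) corresponds to $i=\frac{n+1}{2}$ and is precisely the final nonzero term $\frac{1}{n+1}\binom{n+1}{n+1}=\frac{1}{n+1}$, so that extending or truncating the range does not change the value. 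Once that bookkeeping is pinned down, the remainder is a direct citation of Lemma \ref{lem-6} and an index relabelling.
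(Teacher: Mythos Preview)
Your proposal is correct and follows essentially the same approach as the paper's own proof: substitute $k=1$ into Theorem~\ref{thm-4}, use $\binom{0}{t+1-2j}=0$ unless $t+1=2j$ to collapse the double sum to $2^{n-1}\sum_{i\ge 1}\frac{1}{2i}\binom{n+1}{2i}$, check that the two parity branches coincide, and then invoke Lemma~\ref{lem-6}. Your treatment of the parity bookkeeping is in fact slightly more explicit than the paper's, but the argument is the same.
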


\begin{proof}
By Theorem \ref{thm-4}, the Kirchhoff index of $Q_{n, 1}$ can be presented by
\[Kf(Q_{n, 1})=\left\{\begin{array}{ll}
2^{n-1}\sum\limits_{t=0}^{n}\sum\limits_{j=0}^{n}\frac{1}{t+1}{n+1\choose 2j}{0\choose t+1-2j} &\mbox{if $n$ is odd}\\
2^{n-1}\sum\limits_{t=0}^{n-1}\sum\limits_{j=0}^{n}\frac{1}{t+1}{n+1\choose 2j}{0\choose t+1-2j}&\mbox{if $n$ is even}.
\end{array}\right.\]
It is easy to check that
\[\begin{array}{lll}
\sum\limits_{t=0}^{n}\sum\limits_{j=0}^{n}\frac{1}{t+1}{n+1\choose 2j}{0\choose t-2j+1}=\sum\limits_{j=1}^{n}\frac{1}{2j}{n+1\choose 2j}{0\choose 0}=\sum\limits_{j=1}^{n}\frac{1}{2j}{n+1\choose 2j}
\end{array}\]
and
\[\begin{array}{lll}
\sum\limits_{t=0}^{n-1}\sum\limits_{j=0}^{n}\frac{1}{t+1}{n+1\choose 2j}{0\choose t-2j+1}=\sum\limits_{j=1}^{n}\frac{1}{2j}{n+1\choose 2j}{0\choose 0}=\sum\limits_{j=1}^{n}\frac{1}{2j}{n+1\choose 2j}
\end{array}.\]
Combined with the above two situations, we get
\[\begin{array}{lll}
Kf(Q_{n, 1})&=&2^{n-1}\sum\limits_{j=1}^{n}\frac{1}{2j}{n+1\choose2j}
\end{array}.\]
By lemma \ref{lem-6}, we have
\[\begin{array}{ll}
Kf(Q_{n, 1})=2^{n-1}\sum\limits_{j=1}^{n}\frac{1}{2j}{n+1\choose2j}=2^{n-1}\sum\limits_{t=1}^{n}\frac{2^{t}-1}{t+1}.\\
\end{array}\]

This completes the proof.
\end{proof}
\begin{cor}\label{cor-5}
The Kirchhoff index of the enhanced hypercube $Q_{n, n-1}$ is given by
\[\begin{array}{ll}
Kf(Q_{n, n-1})=2^{n-1}\left(\sum\limits_{t=1}^{n-2}\frac{2^{t}-1}{t}+3\frac{(n-2)2^{n-1}+1}{n(n-1)}\right)
\end{array}.\]
\end{cor}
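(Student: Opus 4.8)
The plan is to specialize Theorem \ref{thm-4} to $k=n-1$ and then evaluate the two resulting binomial sums by identities of the same flavour as Lemma \ref{lem-6}. First note that when $k=n-1$ the integers $n$ and $k$ always have opposite parity, so only the second branch of Theorem \ref{thm-4} applies. Substituting $n-k+2=3$ and $k-1=n-2$ gives
\[Kf(Q_{n,n-1})=2^{n-1}\sum_{t=0}^{n-1}\sum_{j=0}^{n}\frac{1}{t+1}\binom{3}{2j}\binom{n-2}{t+1-2j}.\]
Since $\binom{3}{2j}\neq 0$ only for $j=0$ (value $1$) and $j=1$ (value $3$), the inner sum collapses to $\binom{n-2}{t+1}+3\binom{n-2}{t-1}$, so that $Kf(Q_{n,n-1})=2^{n-1}(S_1+3S_2)$ with $S_1=\sum_{t=0}^{n-1}\frac{1}{t+1}\binom{n-2}{t+1}$ and $S_2=\sum_{t=0}^{n-1}\frac{1}{t+1}\binom{n-2}{t-1}$.

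For $S_1$ I would reindex by $s=t+1$; since $\binom{n-2}{s}=0$ for $s>n-2$ this yields $S_1=\sum_{s=1}^{n-2}\frac{1}{s}\binom{n-2}{s}$. This is a Lemma \ref{lem-6}-type identity: using $\binom{m}{s}=\binom{m-1}{s-1}+\binom{m-1}{s}$ together with $\frac{1}{s}\binom{m-1}{s-1}=\frac{1}{m}\binom{m}{s}$ and iterating exactly the telescoping of the proof of Lemma \ref{lem-6} gives $\sum_{s=1}^{m}\frac{1}{s}\binom{m}{s}=\sum_{p=1}^{m}\frac{2^p-1}{p}$ (a clean induction on $m$ also works). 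With $m=n-2$ this produces the first term $\sum_{t=1}^{n-2}\frac{2^t-1}{t}$ of the claimed formula.

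For $S_2$ I would reindex by $u=t-1$; the surviving terms have $0\le u\le n-2$, so $S_2=\sum_{u=0}^{n-2}\frac{1}{u+2}\binom{n-2}{u}$. Writing $m=n-2$ and using the factorial identity $\frac{1}{u+2}\binom{m}{u}=\frac{u+1}{(m+1)(m+2)}\binom{m+2}{u+2}$, then substituting $w=u+2$, reduces $S_2$ to $\frac{1}{(m+1)(m+2)}\sum_{w=2}^{m+2}(w-1)\binom{m+2}{w}$. Splitting this with the elementary sums $\sum_{w=0}^{N}w\binom{N}{w}=N2^{N-1}$ and $\sum_{w=0}^{N}\binom{N}{w}=2^{N}$ (and discarding the $w=0,1$ contributions) gives $\sum_{w=2}^{m+2}(w-1)\binom{m+2}{w}=m\,2^{m+1}+1$, whence $S_2=\frac{(n-2)2^{n-1}+1}{n(n-1)}$.

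Assembling $Kf(Q_{n,n-1})=2^{n-1}(S_1+3S_2)$ then yields the stated expression. The two points requiring care are the bookkeeping of the summation ranges after reindexing (so that the vanishing binomials are correctly dropped) and the closed-form evaluation of $S_2$; the latter is the main obstacle. The cleanest way around it is the factorial identity $\frac{1}{u+2}\binom{m}{u}=\frac{u+1}{(m+1)(m+2)}\binom{m+2}{u+2}$, which converts $S_2$ into two standard binomial moments and keeps the whole argument combinatorial, in line with the proof of Lemma \ref{lem-6}.
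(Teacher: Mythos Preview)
Your proof is correct and follows essentially the same route as the paper: specialize Theorem~\ref{thm-4} to $k=n-1$, collapse the inner $j$-sum to the two terms $j=0,1$, and evaluate the resulting sums $S_1$ and $S_2$ separately (with the same Lemma~\ref{lem-6}-type identity for $S_1$). The only difference is in handling $S_2$: the paper writes $\binom{n-2}{t-1}=\binom{n-1}{t}-\binom{n-2}{t}$ and then absorbs the factor $\frac{1}{t+1}$ via $\frac{1}{t+1}\binom{n-1}{t}=\frac{1}{n}\binom{n}{t+1}$ and $\frac{1}{t+1}\binom{n-2}{t}=\frac{1}{n-1}\binom{n-1}{t+1}$, whereas you use the factorial identity $\frac{1}{u+2}\binom{m}{u}=\frac{u+1}{(m+1)(m+2)}\binom{m+2}{u+2}$ and binomial moments---both are elementary and yield the same closed form.
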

\begin{proof}
Let $k=n-1$, then $n$ and $k$ have the different parity. By Theorem \ref{thm-4}, we have
\[\begin{array}{lll}
Kf(Q_{n, n-1})&=&2^{n-1}\sum\limits_{t=0}^{n-1}\sum\limits_{j=0}^{n}\frac{1}{t+1}{3\choose2j}{n-2\choose t+1-2j}\\
&=&2^{n-1}\sum\limits_{t=0}^{n-1}\frac{1}{t+1}\left({3\choose 0}{n-2\choose t+1}+{3\choose 2}{n-2\choose t-1}\right)\\
&=&2^{n-1}\left(\sum\limits_{t=0}^{n-1}\frac{1}{t+1}{n-2\choose t+1}+3\sum\limits_{t=0}^{n-1}\frac{1}{t+1}{n-2\choose t-1}\right)\\
\end{array}\]
By Lemma \ref{lem-6}, it follows that
\[\begin{array}{ll}
 \sum\limits_{t=0}^{n-1}\frac{1}{t+1}{n-2\choose t+1}=\sum\limits_{r=1}^{n}\frac{1}{r}{n-2\choose r}=\sum\limits_{s=1}^{n-2}\frac{2^{s}-1}{s}
 \end{array}.\]
Besides,
\[\begin{array}{lll}
\sum\limits_{t=0}^{n-1}\frac{1}{t+1}{n-2\choose t-1}&=&\sum\limits_{t=0}^{n-1}\frac{1}{t+1}\left({n-1\choose t}-{n-2\choose t}\right)=\sum\limits_{t=0}^{n-1}\frac{1}{t+1}{n-1\choose t}-\sum\limits_{t=0}^{n-1}\frac{1}{t+1}{n-2\choose t}\\
&=&\frac{1}{n}\sum\limits_{t=0}^{n-1}{n\choose t+1}-\frac{1}{n-1}\sum\limits_{t=0}^{n-1}{n-1\choose t+1}=\frac{1}{n(n-1)}[(n-2)2^{n-1}+1]
\end{array}\]
Hence we obtain that
\[\begin{array}{ll}
Kf(Q_{n,n-1})=2^{n-1}\left(\sum\limits_{t=1}^{n-2}\frac{2^{t}-1}{t}+3\frac{(n-2)2^{n-1}+1}{n(n-1)}\right)
\end{array}.\]

This completes the proof.
\end{proof}

\begin{table}[h]{\footnotesize
\caption{\label{tab-1}{Kirchhoff index of $Q_{n, k}$ ($1\le k\le n-1$).}}
\resizebox{\textwidth}{26mm}{
\begin{tabular}{c|ccccccccccc}
\hline
\diagbox{n}{$Kf(Q_{n, k})$}{k}&$1$&$2$&$3$&$4$&$5$&$6$&$7$&$8$&$9$\\
\hline
$2$&3&--&--&--&--&--&--&--&--\\
\hline
$3$&13&14&--&--&--&--&--&--&--\\
\hline
$4$&50&51.6&54&--&--&--&--&--&--\\
\hline
$5$&182.7&185.3&189.1&194.9&--&--&--&--&--\\
\hline
$6$&653.3&657.9&664&672.8&687.5&--&--&--&--\\
\hline
$7$&2322.7&2330.7&2341.0&2355.0&2376.3&2413.6&--&--&--\\
\hline
$8$&8272&8286.2&8304&8327.4&8360.4&8412.4&8509.4&--&--\\
\hline
$9$&29626&29651&29682&29722&29776&29854&29984&30242&--\\
\hline
$10$&106870&106910&106970&107040&107130&107250&107440&107770&108480\\
\hline
\end{tabular}}}
\end{table}

We calculate the Kirchhoff index of $Q_{n, k}$ for some $n$ and $k$ in Tabla \ref{tab-1}, from which we see that the Kirchhoff index of $Q(n, k)$ is increased with the increase of $k$. Fortunately, it is true in general. To prove this property, we lead-in the following lemma.
\begin{lem}\label{lem-7}
 Let $1\le k\le n-1$. We have
$\sum\limits_{t=0}^{n}\sum\limits_{j=0}^{n}\frac{1}{t+1}\left[{n-k\choose 2j}{k-1\choose t-2j-1}-{n-k\choose 2j-1}{k-1\choose t-2j}\right]\ge0$.
\end{lem}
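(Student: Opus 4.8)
The plan is to recognize the weight $\frac{1}{t+1}$ as the integral $\int_0^1 x^t\,dx$, which turns the double sum into an integral whose integrand factors into binomial generating functions and is manifestly nonnegative on $[0,1]$. I would first write the left-hand side as $A-B$, where
\[A=\sum_{t=0}^{n}\sum_{j=0}^{n}\frac{1}{t+1}\binom{n-k}{2j}\binom{k-1}{t-2j-1},\]
\[B=\sum_{t=0}^{n}\sum_{j=0}^{n}\frac{1}{t+1}\binom{n-k}{2j-1}\binom{k-1}{t-2j}.\]
Since $\binom{n-k}{2j}$ and $\binom{k-1}{t-2j-1}$ vanish unless $2j\le n-k$ and $0\le t-2j-1\le k-1$, every nonzero term of $A$ already satisfies $t\le n$ and $j\le n$, and likewise for $B$. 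Thus the stated truncation loses nothing, and I may let the indices run over all nonnegative integers when passing to generating functions.

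Next I would substitute $\frac{1}{t+1}=\int_0^1 x^t\,dx$ and interchange the finite sum with the integral. For $A$, reindexing by $i=t-2j-1$ factors the summand as $x\cdot x^{2j}\cdot x^{i}$; applying the even-part identity $\sum_j\binom{m}{2j}x^{2j}=\tfrac12\bigl[(1+x)^m+(1-x)^m\bigr]$ with $m=n-k$ together with $\sum_i\binom{k-1}{i}x^i=(1+x)^{k-1}$ gives
\[A=\int_0^1 x\,\frac{(1+x)^{n-k}+(1-x)^{n-k}}{2}\,(1+x)^{k-1}\,dx.\]
For $B$, reindexing by $i=t-2j$ and using the odd-part identity $\sum_j\binom{m}{2j-1}x^{2j-1}=\tfrac12\bigl[(1+x)^m-(1-x)^m\bigr]$ gives
\[B=\int_0^1 x\,\frac{(1+x)^{n-k}-(1-x)^{n-k}}{2}\,(1+x)^{k-1}\,dx.\]

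Subtracting, the $(1+x)^{n-k}$ contributions cancel and the $(1-x)^{n-k}$ contributions add, leaving
\[A-B=\int_0^1 x\,(1-x)^{n-k}(1+x)^{k-1}\,dx.\]
Because $1\le k\le n-1$ forces $n-k\ge1$ and $k-1\ge0$, on $[0,1]$ each of $x$, $(1-x)^{n-k}$, and $(1+x)^{k-1}$ is nonnegative, so the integrand is nonnegative and hence $A-B\ge0$, which is exactly the claim.

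The one step needing genuine care is the reindexing that splits each inner sum into an even (or odd) partial binomial sum in $x^{2j}$ times a full binomial sum in $x^i$; once the even/odd generating-function identities are in place, the cancellation of the $(1+x)^{n-k}$ terms and the sign of the resulting integrand are immediate. I therefore expect the index-shift bookkeeping, together with the check that extending the summation ranges introduces no spurious terms, to be the main obstacle.
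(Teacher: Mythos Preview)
Your proposal is correct and follows essentially the same route as the paper: both arguments convert $\tfrac{1}{t+1}$ into an integral and identify the resulting expression with $\int_0^1 x(1-x)^{n-k}(1+x)^{k-1}\,dx$, which is nonnegative on $[0,1]$. The only cosmetic difference is that the paper first merges the two inner sums into the single alternating sum $\sum_r (-1)^r\binom{n-k}{r}\binom{k-1}{(t-1)-r}$ and then recognizes this as the coefficient sequence of $x(1-x)^{n-k}(1+x)^{k-1}$ before integrating, whereas you keep $A$ and $B$ separate and use the even/odd-part binomial identities to reach the same integrand after subtraction.
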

\begin{proof}
Since $1\le k\le n-1$, we have $1\le n-k\le n-1$ and $0\le k-1\le n-2$. It is clear that
$F(n,k)=\sum\limits_{t=0}^{n}\sum\limits_{j=0}^{n}\frac{1}{t+1}\left[{n-k\choose 2j}{k-1\choose t-2j-1}-{n-k\choose 2j-1}{k-1\choose t-2j}\right]=\sum\limits_{t=0}^{n}\frac{1}{t+1}\sum\limits_{r=0}^{n}(-1)^{r}{n-k\choose r}{k-1\choose (t-1)-r}$, from which
 we define a function $F_{n, k}(x)=\sum\limits_{t=0}^{n}\frac{1}{t+1}\sum\limits_{r=0}^{n}(-1)^{r}{n-k\choose r}{k-1\choose (t-1)-r}x^{t}$. Obviously, $F_{n, k}(1)=F(n,k)$.
Now we introduce another function,
\[\begin{array}{lll}
G(x)&=&x(1-x)^{n-k}(1+x)^{k-1}=x\sum\limits_{i=0}^{n-k}(-1)^{i}{n-k\choose i}x^{i}\sum\limits_{j=0}^{k-1}{k-1\choose j}x^{j}\\
&=&x\sum\limits_{i=0}^{n-k}\sum\limits_{j=0}^{k-1}(-1)^{i}{n-k\choose i}{k-1\choose j}x^{i+j}=x\sum\limits_{t=1}^{n}\sum\limits_{r=0}^{n}(-1)^{r}{n-k\choose r}{k-1\choose(t-1)-r}x^{t-1}\\
&=&\sum\limits_{t=1}^{n}\sum\limits_{r=0}^{n}(-1)^{r}{n-k\choose r}{k-1\choose(t-1)-r}x^{t}=\sum\limits_{t=0}^{n}\sum\limits_{r=0}^{n}(-1)^{r}{n-k\choose r}{k-1\choose(t-1)-r}x^{t}.
\end{array}\]
Then we have
\[\begin{array}{ll}
H(x)=\int_{0}^{x}G(u)\,du&=\int_{0}^{x}u(1-u)^{n-k}(1+u)^{k-1}\,du\\
&=\int_{0}^{x}\sum\limits_{t=0}^{n}\sum\limits_{r=0}^{n}(-1)^{r}{n-k\choose r}{k-1\choose(t-1)-r}u^{t}\,du\\
&=\sum\limits_{t=0}^{n}\sum\limits_{r=0}^{n}(-1)^{r}{n-k\choose r}{k-1\choose(t-1)-r}\int_{0}^{x}u^{t}\,du\\
&=\sum\limits_{t=0}^{n}\frac{1}{t+1}\sum\limits_{r=0}^{n}(-1)^{r}{n-k\choose r}{k-1\choose (t-1)-r}x^{t+1}.
\end{array}\]
Hence, $H(1)=F_{n, k}(1)=F(n,k)$. On the other hand,
\[\begin{array}{lll}
H(1)=\int_{0}^{1}G(u)\,du=\int_{0}^{1}u(1-u)^{n-k}(1+u)^{k-1}\,du.
\end{array}\]
Since $G(u)=u(1-u)^{n-k}(1+u)^{k-1}\ge0$ while $0\le u\le1$, we have
\[\begin{array}{lll}
F(n, k)=\int_{0}^{1}G(u)\,du=\int_{0}^{1}u(1-u)^{n-k}(1+u)^{k-1}\,du\ge0.
\end{array}\]
This completes the proof.
\end{proof}
\begin{thm}\label{thm-5}
Let $1\le k\le n-1$. For a given $n$, the Kirchhoff index of the enhanced hypercube $Q_{n, k}$ is increased with the increase of $k$ .
\end{thm}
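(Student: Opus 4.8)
The plan is to reduce the statement to the positivity of a single definite integral, exactly in the spirit of Lemma~\ref{lem-7}. First I would merge the two parity cases of Theorem~\ref{thm-4} into one formula. Writing $\Phi(n,k)=\sum_{t=0}^{n}\sum_{j=0}^{n}\frac{1}{t+1}{n-k+2\choose 2j}{k-1\choose t+1-2j}$, so that $Kf(Q_{n,k})=2^{n-1}\Phi(n,k)$, I would observe that the $t=n$ term of $\Phi(n,k)$ is nonzero only when $n\equiv k\pmod 2$: it requires simultaneously $2j\le n-k+2$ and $n+1-2j\le k-1$, i.e. $2j=n-k+2$, which forces $n-k$ even. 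Hence the ``$n\not\equiv k$'' formula of Theorem~\ref{thm-4}, which stops at $t=n-1$, agrees with the one running to $t=n$, and $Kf(Q_{n,k})=2^{n-1}\Phi(n,k)$ holds in both cases. This uniform expression is what makes the step $k\mapsto k+1$ tractable, since the parity-dependent $\pm\frac{1}{n+1}$ and the $k-1$ correction terms appearing in (\ref{Q-eq-1}) are precisely the endpoint terms $t=n$ and $t=0$ of $\Phi(n,k)$.

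Next I would compute the increment $Kf(Q_{n,k+1})-Kf(Q_{n,k})=2^{n-1}\bigl(\Phi(n,k+1)-\Phi(n,k)\bigr)$ for $1\le k\le n-2$. Substituting $k\mapsto k+1$ turns ${n-k+2\choose 2j}{k-1\choose t+1-2j}$ into ${n-k+1\choose 2j}{k\choose t+1-2j}$. Applying Pascal's rule in the forms ${k\choose s}={k-1\choose s}+{k-1\choose s-1}$ and ${n-k+2\choose 2j}={n-k+1\choose 2j}+{n-k+1\choose 2j-1}$ with $s=t+1-2j$, the shared term ${n-k+1\choose 2j}{k-1\choose s}$ cancels, and the difference of the inner sums collapses to $\sum_{j}\bigl[{n-k+1\choose 2j}{k-1\choose t-2j}-{n-k+1\choose 2j-1}{k-1\choose t+1-2j}\bigr]$. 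Reading the even indices $r=2j$ and odd indices $r=2j-1$ together, this is exactly the alternating convolution $\sum_{r}(-1)^{r}{n-k+1\choose r}{k-1\choose t-r}$, i.e. the coefficient of $x^{t}$ in $(1-x)^{n-k+1}(1+x)^{k-1}$.

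At this point I would invoke the generating-function/integration device used to prove Lemma~\ref{lem-7}. Since $\int_{0}^{1}u^{t}\,du=\frac{1}{t+1}$ and $(1-x)^{n-k+1}(1+x)^{k-1}$ has degree $n$, summing its coefficients against $\frac{1}{t+1}$ over $t=0,\dots,n$ gives
\[\Phi(n,k+1)-\Phi(n,k)=\int_{0}^{1}(1-u)^{n-k+1}(1+u)^{k-1}\,du.\]
Because $1\le k\le n-2$ forces the exponents $n-k+1\ge 1$ and $k-1\ge 0$, the integrand is nonnegative on $[0,1]$ and positive on $(0,1)$, so the integral is positive and $Kf(Q_{n,k+1})>Kf(Q_{n,k})$, which is the claim. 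As an aside connecting to Lemma~\ref{lem-7}, writing $(1-u)^{n-k+1}=(1-u)^{n-k}-u(1-u)^{n-k}$ exhibits this increment as $\int_{0}^{1}(1-u)^{n-k}(1+u)^{k-1}\,du-F(n,k)$, where $F(n,k)$ is the nonnegative quantity of Lemma~\ref{lem-7}.

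The main obstacle I expect is bookkeeping rather than genuine difficulty: one must justify the merging of the two parity cases carefully, so that the increment is a single clean sum instead of a case-split expression, and then apply Pascal's rule in the correct order so that the four binomial products telescope down to one alternating convolution. Once the difference is identified with the coefficients of $(1-x)^{n-k+1}(1+x)^{k-1}$, the integral representation and its positivity follow at once.
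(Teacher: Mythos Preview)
Your proposal is correct and is actually a cleaner argument than the paper's. Both proofs start from the same place: merging the two parity cases of Theorem~\ref{thm-4} into the single expression $Kf(Q_{n,k})=2^{n-1}\Phi(n,k)$, and then using Pascal's rule twice to reduce the increment $\triangledown_k=Kf(Q_{n,k+1})-Kf(Q_{n,k})$ to
\[
\triangledown_k=2^{n-1}\sum_{t=0}^{n}\sum_{j=0}^{n}\frac{1}{t+1}\left[{n-k+1\choose 2j}{k-1\choose t-2j}-{n-k+1\choose 2j-1}{k-1\choose t+1-2j}\right],
\]
which is exactly the paper's formulas (\ref{delta-2})--(\ref{delta-3}). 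From here the two arguments diverge. The paper proceeds by induction on $k$: it computes $\triangledown_1=\frac{2^{n-1}}{n+1}>0$ by hand, and then shows via a further Pascal expansion that $\triangledown_{k+1}=\triangledown_k+2^{n}F(n,k)$, where $F(n,k)\ge 0$ is the quantity of Lemma~\ref{lem-7}. You instead recognize the inner sum directly as the coefficient of $x^{t}$ in $(1-x)^{n-k+1}(1+x)^{k-1}$ and hence obtain
\[
\triangledown_k=2^{n-1}\int_0^1(1-u)^{n-k+1}(1+u)^{k-1}\,du>0
\]
in one stroke, with no induction and no separate base case. Your aside that $(1-u)^{n-k+1}=(1-u)^{n-k}-u(1-u)^{n-k}$ indeed recovers the paper's recursion $\triangledown_{k+1}-\triangledown_k=2^{n}F(n,k)$, so the two proofs are fully consistent. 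The paper's approach has the minor benefit of isolating the explicit value $\triangledown_1=\frac{2^{n-1}}{n+1}$; your approach is shorter and makes the positivity transparent for every $k$ at once.
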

\begin{proof}
 Denote by $\triangledown_{k}=Kf(Q_{n, k+1})-Kf(Q_{n, k})$ for $1\le k<n-1$. We will show that $\triangledown_{k}>0$ by induction on $k$.
By Theorem \ref{thm-4}, if $n\not\equiv k~(\mathrm{mod}~2)$, we have
\begin{equation}\label{delta-1}\begin{array}{ll}
\triangledown_{k}&=2^{n-1}\sum\limits_{t=0}^{n}\sum\limits_{j=0}^{n}\frac{1}{t+1}{n-k+1\choose 2j}{k\choose t+1-2j}-2^{n-1}\sum\limits_{t=0}^{n-1}\sum\limits_{j=0}^{n}\frac{1}{t+1}{n-k+2\choose 2j}{k-1\choose t+1-2j}\\
&=2^{n-1}\sum\limits_{t=0}^{n}\sum\limits_{j=0}^{n}\frac{1}{t+1}{n-k+1\choose 2j}{k\choose t+1-2j}-2^{n-1}\sum\limits_{t=0}^{n}\sum\limits_{j=0}^{n}\frac{1}{t+1}{n-k+2\choose 2j}{k-1\choose t+1-2j}
\end{array}
\end{equation}
The second equality holds because $\sum\limits_{j=0}^{n}\frac{1}{t+1}{n-k+2\choose 2j}{k-1\choose t+1-2j}=0$ if $t=n$( in fact, by $n-k+2\ge 2j$ and $k-1\ge  n+1-2j$, we obtain that $n-k=2j-2$, which contradict to $n\not\equiv k~(\mathrm{mod}~2)$). Thus (\ref{delta-1}) can be simplified as
\begin{equation}\label{delta-2}\begin{array}{ll}
\triangledown_{k}&=2^{n-1}\sum\limits_{t=0}^{n}\sum\limits_{j=0}^{n}\frac{1}{t+1}\left[{n-k+1\choose 2j}{k\choose t-2j+1}-{n-k+2\choose 2j}{k-1\choose t-2j+1}\right]\\
&=2^{n-1}\sum\limits_{t=0}^{n}\sum\limits_{j=0}^{n}\frac{1}{t+1}\left[{n-k+1\choose 2j}\left({k-1\choose t+1-2j}+{k-1\choose t-2j}\right)-\left({n-k+1\choose 2j-1}+{n-k+1\choose 2j}\right){k-1\choose t+1-2j}\right]\\
&=2^{n-1}\sum\limits_{t=0}^{n}\sum\limits_{j=0}^{n}\frac{1}{t+1}\left[{n-k+1\choose 2j}{k-1\choose t-2j}-{n-k+1\choose 2j-1}{k-1\choose t-2j+1}\right].
\end{array}
\end{equation}
If $n\equiv k~(\mathrm{mod}~2)$, we have
\begin{equation}\label{delta-3}\begin{array}{ll}
\triangledown_{k}&=2^{n-1}\sum\limits_{t=0}^{n-1}\sum\limits_{j=0}^{n}\frac{1}{t+1}{n-k+1\choose 2j}{k\choose t+1-2j}-2^{n-1}\sum\limits_{t=0}^{n}\sum\limits_{j=0}^{n}\frac{1}{t+1}{n-k+2\choose 2j}{k-1\choose t+1-2j}\\
&=2^{n-1}\sum\limits_{t=0}^{n}\sum\limits_{j=0}^{n}\frac{1}{t+1}\left[{n-k+1\choose 2j}{k\choose t-2j+1}-{n-k+2\choose 2j}{k-1\choose t-2j+1}\right]\\
&=2^{n-1}\sum\limits_{t=0}^{n}\sum\limits_{j=0}^{n}\frac{1}{t+1}\left[{n-k+1\choose 2j}\left({k-1\choose t+1-2j}+{k-1\choose t-2j}\right)-\left({n-k+1\choose 2j-1}+{n-k+1\choose 2j}\right){k-1\choose t+1-2j}\right]\\
&=2^{n-1}\sum\limits_{t=0}^{n}\sum\limits_{j=0}^{n}\frac{1}{t+1}\left[{n-k+1\choose 2j}{k-1\choose t-2j}-{n-k+1\choose 2j-1}{k-1\choose t-2j+1}\right].
\end{array}
\end{equation}
The second equality holds because $\sum\limits_{j=0}^{n}\frac{1}{t+1}{n-k+1\choose 2j}{k\choose t-2j+1}=0$ if $t=n$ ( in fact, by $n-k+1\ge 2j$ and $k\ge n-2j+1$, we can imply that
$n-k=2j-1$, which contradict to $n\equiv k~(\mathrm{mod}~2)$).

The last representations of $\triangledown_{k}$ in  (\ref{delta-2}) and (\ref{delta-3}) are the same. Hence we need not to distinguish the above  two cases in what follows.

For $k=1$, from (\ref{delta-2}) we have
\begin{equation}\label{qq-1}\begin{array}{ll}
\triangledown_{1}&=2^{n-1}\sum\limits_{t=0}^{n}\sum\limits_{j=0}^{n}\frac{1}{t+1}\left[{n\choose 2j}{0\choose t-2j}-{n\choose 2j-1}{0\choose t-2j+1}\right]\\
&=2^{n-1}\left(\sum\limits_{t=0}^{n}\sum\limits_{j=0}^{n}\frac{1}{t+1}{n\choose 2j}{0\choose t-2j}-\sum\limits_{t=0}^{n}\sum\limits_{j=0}^{n}\frac{1}{t+1}{n\choose 2j-1}{0\choose t-2j+1} \right)\\
&=2^{n-1}\left(\sum\limits_{j=0}^{n}\frac{1}{2j+1}{n\choose 2j}-\sum\limits_{j=1}^{n}\frac{1}{2j}{n\choose 2j-1}\right)\\
&=2^{n-1}\left(\frac{1}{n+1}\sum\limits_{j=0}^{n}{n+1\choose 2j+1}-\frac{1}{n+1}\sum\limits_{j=1}^{n}{n+1\choose 2j}\right)=\frac{2^{n-1}}{n+1}>0.
\end{array}
\end{equation}
Now we assume that $\triangledown_{k}=2^{n-1}\sum\limits_{t=0}^{n}\sum\limits_{j=0}^{n}\frac{1}{t+1}\left[{n-k+1\choose 2j}{k-1\choose t-2j}-{n-k+1\choose 2j-1}{k-1\choose t-2j+1}\right]>0$ holds for  $1\le k < n-2$. Next, we will prove that $\triangledown_{k+1}>0$. By regarding $k$ as $k+1$, from (\ref{delta-2}) we get

\begin{equation}\label{delta-5}\begin{array}{ll}
\triangledown_{k+1}=2^{n-1}\sum\limits_{t=0}^{n}\sum\limits_{j=0}^{n}\frac{1}{t+1}\left[{n-k\choose 2j}{k\choose t-2j}-{n-k\choose 2j-1}{k\choose t-2j+1}\right].
\end{array}\end{equation}
Notice that the general term of (\ref{delta-5}) can be simplified as
\[\begin{array}{lll}
&&{n-k\choose 2j}{k\choose t-2j}-{n-k\choose 2j-1}{k\choose t-2j+1}\\
&=&\left[{n-k+1\choose 2j}-{n-k\choose 2j-1}\right]\left[{k-1\choose t-2j}+{k-1\choose t-2j-1}\right]
-\left[{n-k+1\choose 2j-1}  -  {n-k\choose 2j-2}\right]\left[{k-1\choose t-2j}  +  {k-1\choose t-2j+1}\right]\\
&  =  &  \left({n-k+1\choose 2j}{k-1\choose t-2j} -  {n-k+1\choose 2j-1}{k-1\choose t-2j+1}\right)+\left({n-k\choose 2j-2}{k-1\choose t-2j+1}-{n-k\choose2j-1}{k-1\choose t-2j}\right)\\
&&+\left({n-k+1\choose 2j}{k-1\choose t-2j-1} - {n-k\choose2j-1}{k-1\choose t-2j-1}\right)+\left({n-k\choose 2j-2}{k-1\choose t-2j}  -  {n-k+1\choose 2j-1}{k-1\choose t-2j}\right)\\
&  =  &  \left({n-k+1\choose 2j}{k-1\choose t-2j}-{n-k+1\choose 2j-1}{k-1\choose t-2j+1}\right)+\left({n-k\choose 2j-2}{k-1\choose t-2j+1}-{n-k\choose2j-1}{k-1\choose t-2j}\right)+{n-k\choose2j}{k-1\choose t-2j-1}-{n-k\choose 2j-1}{k-1\choose t-2j}\\
&  =  &  \left({n-k+1\choose 2j}{k-1\choose t-2j}-{n-k+1\choose 2j-1}{k-1\choose t-2j+1}\right)+{n-k\choose 2j-2}{k-1\choose t-2j+1}+{n-k\choose2j}{k-1\choose t-2j-1}-2{n-k\choose 2j-1}{k-1\choose t-2j}.
\end{array}\]
We can rewrite $\triangledown_{k+1}$ as follows.

\[\begin{array}{lll}
\triangledown_{k+1}&=&2^{n-1}\sum\limits_{t=0}^{n}\sum\limits_{j=0}^{n}\frac{1}{t+1}\left[{n-k\choose 2j}{k\choose t-2j}-{n-k\choose 2j-1}{k\choose t-2j+1}\right]\\
&=&2^{n-1}\sum\limits_{t=0}^{n}\sum\limits_{j=0}^{n}\frac{1}{t+1}\left[{n-k+1\choose 2j}{k-1\choose t-2j}-{n-k+1\choose 2j-1}{k-1\choose t-2j+1}\right]\\
&&+2^{n-1}\sum\limits_{t=0}^{n}\sum\limits_{j=0}^{n}\frac{1}{t+1}\left[{n-k\choose 2j}{k-1\choose t-2j-1}+{n-k\choose 2j-2}{k-1\choose t-2j+1} - 2{n-k\choose 2j-1}{k-1\choose t-2j}\right]\\
&=&\triangledown_{k}+2^{n-1}\sum\limits_{t=0}^{n}\frac{1}{t+1}\sum\limits_{j=0}^{n}\left[{n-k\choose 2j}{k-1\choose t-2j-1}+{n-k\choose 2j-2}{k-1\choose t-2j+1}-2{n-k\choose 2j-1}{k-1\choose t-2j}\right]\\
&=&\triangledown_{k}+2^{n-1}\sum\limits_{t=0}^{n}\frac{1}{t+1}\sum\limits_{j=0}^{n}\left[2{n-k\choose 2j}{k-1\choose t-2j-1}-2{n-k\choose 2j-1}{k-1\choose t-2j}\right]\\
&=&\triangledown_{k}+2^{n}\sum\limits_{t=0}^{n}\sum\limits_{j=0}^{n}\frac{1}{t+1}\left[{n-k\choose 2j}{k-1\choose t-2j-1}-{n-k\choose 2j-1}{k-1\choose t-2j}\right]\\
&=&\triangledown_{k}+2^{n}\sum\limits_{t=0}^{n}\sum\limits_{j=0}^{n}\frac{1}{t+1}\left[{n-k\choose 2j}{k-1\choose t-2j-1}-{n-k\choose 2j-1}{k-1\choose t-2j}\right].\\
\end{array}\]
By the induction hypothesis, we have $\triangledown_{k}>0$. From Lemma \ref{lem-7}, we know that the above last term are no less than zero. It implies  that $\triangledown_{k+1}=Kf(Q_{n, k+2})-Kf(Q_{n, k+1})> 0$, i.e., $Kf(Q_{n, k+1})> Kf(Q_{n, k})$ for any $1\le k< n-1$.

This completes the proof.
\end{proof}

\begin{remark}
From Theorem \ref{thm-5} we have an interesting observation that $Q_{n,1}, Q_{n,2}$,..., $Q_{n, n-1}$ have different Kirchhoff indexes. It implies that they have different spectra and so they are not isomorphic from each other.
\end{remark}

By Theorem \ref{thm-5}, we know that  the Kirchhoff index of $Q_{n, k}$ is increased as $k$ increases. It follows that $Kf(Q_{n,1})\le Kf(Q_{n,k})\le Kf(Q_{n,n-1})$ for $1\le k\le n-1$. Thus we obtain the lower  and upper bounds of $Kf(Q_{n, k})$ below.
\begin{cor}\label{cor-6}
$\sum\limits_{t=1}^{n}\frac{2^{t}-1}{t+1}\le \frac{Kf(Q_{n,k})}{2^{n-1}}\le \sum\limits_{t=1}^{n-2}\frac{2^{t}-1}{t}+3\frac{(n-2)2^{n-1}+1}{n(n-1)}$.
\end{cor}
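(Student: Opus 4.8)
The plan is to obtain Corollary \ref{cor-6} as an immediate consequence of the monotonicity established in Theorem \ref{thm-5}, combined with the two explicit evaluations furnished by Corollary \ref{cor-4} and Corollary \ref{cor-5}. No new computation is really needed; the proof is purely a matter of assembling results already proved.

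First I would invoke Theorem \ref{thm-5}, which asserts that for a fixed $n$ the quantity $Kf(Q_{n,k})$ is strictly increasing in $k$ over the admissible range $1\le k\le n-1$. Since $k$ runs over exactly these values, the minimum of $Kf(Q_{n,k})$ is attained at the left endpoint $k=1$ and the maximum at the right endpoint $k=n-1$. This gives the chain
\[
Kf(Q_{n,1})\le Kf(Q_{n,k})\le Kf(Q_{n,n-1}),\qquad 1\le k\le n-1.
\]

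Next I would substitute the closed forms. By Corollary \ref{cor-4} we have $Kf(Q_{n,1})=2^{n-1}\sum_{t=1}^{n}\frac{2^{t}-1}{t+1}$, and by Corollary \ref{cor-5} we have $Kf(Q_{n,n-1})=2^{n-1}\left(\sum_{t=1}^{n-2}\frac{2^{t}-1}{t}+3\frac{(n-2)2^{n-1}+1}{n(n-1)}\right)$. Dividing the whole chain of inequalities by the strictly positive factor $2^{n-1}$, which preserves the order, yields precisely
\[
\sum_{t=1}^{n}\frac{2^{t}-1}{t+1}\le \frac{Kf(Q_{n,k})}{2^{n-1}}\le \sum_{t=1}^{n-2}\frac{2^{t}-1}{t}+3\frac{(n-2)2^{n-1}+1}{n(n-1)},
\]
as claimed.

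There is no genuine obstacle here, since all three ingredients are already established; the result is a direct corollary. The only point worth a one-line remark is that the endpoints $k=1$ and $k=n-1$ both lie in the admissible range for every $n\ge 2$, and that for $n=2$ these endpoints coincide, so the lower and upper bounds agree and the double inequality collapses to an equality, consistent with $Q_{2,1}$ being the only enhanced hypercube in that dimension.
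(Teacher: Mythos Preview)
Your proposal is correct and follows essentially the same approach as the paper: the paper also derives the corollary directly from the monotonicity of Theorem \ref{thm-5} together with the explicit endpoint values in Corollaries \ref{cor-4} and \ref{cor-5}, then divides through by $2^{n-1}$.
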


From the above Corollary we have the following limit function for $Kf(Q_{n, k})$.

\begin{thm}\label{thm-7}
$\lim\limits_{n\to\infty}\frac{Kf(Q_{n, k})}{\frac{2^{2n}}{n+1}}=1$.
\end{thm}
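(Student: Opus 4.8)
The plan is to deduce the limit from the two-sided bound in Corollary \ref{cor-6} by a squeeze argument, so the whole problem reduces to finding the leading asymptotics of the bounding expressions. Writing $L(n)=\sum_{t=1}^{n}\frac{2^{t}-1}{t+1}$ and $U(n)=\sum_{t=1}^{n-2}\frac{2^{t}-1}{t}+3\frac{(n-2)2^{n-1}+1}{n(n-1)}$, Corollary \ref{cor-6} states $L(n)\le Kf(Q_{n,k})/2^{n-1}\le U(n)$. Since $\frac{Kf(Q_{n,k})}{2^{2n}/(n+1)}=\frac{n+1}{2^{n+1}}\cdot\frac{Kf(Q_{n,k})}{2^{n-1}}$, it suffices to show that both $\frac{(n+1)L(n)}{2^{n+1}}$ and $\frac{(n+1)U(n)}{2^{n+1}}$ tend to $1$; the claim then follows by the squeeze theorem, uniformly in $k$.

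The key step is an auxiliary asymptotic estimate: for any fixed constant $c\ge 0$, $\sum_{t=1}^{m}\frac{2^{t}}{t+c}\sim\frac{2^{m+1}}{m}$ as $m\to\infty$. I would prove this by factoring out the last term, writing $\sum_{t=1}^{m}\frac{2^{t}}{t+c}=\frac{2^{m}}{m+c}\sum_{s=0}^{m-1}2^{-s}\frac{m+c}{m-s+c}$ after the substitution $s=m-t$. For each fixed $s$ the factor $\frac{m+c}{m-s+c}\to 1$, while the weights $2^{-s}$ are summable with $\sum_{s\ge 0}2^{-s}=2$, so the inner sum tends to $2$ and the whole expression is $\sim\frac{2^{m}}{m+c}\cdot 2\sim\frac{2^{m+1}}{m}$. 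I expect the interchange of the limit with the summation to be the main obstacle, since $\frac{m+c}{m-s+c}$ grows for $s$ near $m$; this is controlled by splitting the range at $s\le\sqrt m$, where $\frac{m+c}{m-s+c}\to 1$ uniformly, and bounding the tail crudely by $\sum_{s>\sqrt m}2^{-s}\frac{m+c}{m-s+c}\le (m+c)2^{-\sqrt m+1}\to 0$.

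With this estimate in hand the rest is routine. For the lower bound, $L(n)=\sum_{t=1}^{n}\frac{2^{t}}{t+1}-\sum_{t=1}^{n}\frac{1}{t+1}$, where the second sum is $O(\log n)$ and hence negligible, so $L(n)\sim\frac{2^{n+1}}{n}$ and $\frac{(n+1)L(n)}{2^{n+1}}\sim\frac{n+1}{n}\to 1$. For the upper bound, the first sum satisfies $\sum_{t=1}^{n-2}\frac{2^{t}-1}{t}\sim\sum_{t=1}^{n-2}\frac{2^{t}}{t}\sim\frac{2^{n-1}}{n-2}\sim\frac{2^{n-1}}{n}$ (taking $c=0$ and $m=n-2$), while the second term satisfies $3\frac{(n-2)2^{n-1}+1}{n(n-1)}\sim 3\cdot 2^{n-1}\cdot\frac{n-2}{n(n-1)}\sim\frac{3\cdot 2^{n-1}}{n}$; adding these gives $U(n)\sim\frac{4\cdot 2^{n-1}}{n}=\frac{2^{n+1}}{n}$, so that $\frac{(n+1)U(n)}{2^{n+1}}\to 1$ as well.

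Finally, multiplying $L(n)\le Kf(Q_{n,k})/2^{n-1}\le U(n)$ by $\frac{n+1}{2^{n+1}}$ gives $\frac{(n+1)L(n)}{2^{n+1}}\le\frac{Kf(Q_{n,k})}{2^{2n}/(n+1)}\le\frac{(n+1)U(n)}{2^{n+1}}$ for every admissible $k$, and since both bounding ratios converge to $1$, the squeeze theorem yields $\lim_{n\to\infty}\frac{Kf(Q_{n,k})}{2^{2n}/(n+1)}=1$. It is worth noting that the two bounds share the same leading term $\frac{2^{n+1}}{n}$, which is exactly what makes the squeeze conclusive and reflects that, to leading order, the Kirchhoff index is insensitive to $k$.
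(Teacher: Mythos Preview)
Your proof is correct and follows the same overall strategy as the paper: sandwich $\dfrac{Kf(Q_{n,k})}{2^{2n}/(n+1)}$ between the Corollary~\ref{cor-6} bounds divided by $\frac{2^{n+1}}{n+1}$, show both extremes tend to $1$, and conclude by squeezing. The only real difference lies in how the two limits are computed. The paper applies the Stolz--Ces\`aro theorem directly to the quotients $A_n=\big(\sum_{t=1}^{n}\frac{2^{t}-1}{t+1}\big)\big/\frac{2^{n+1}}{n+1}$ and the analogous piece of $B_n$, which reduces each limit to a single-term ratio and avoids any tail estimate. You instead establish the auxiliary asymptotic $\sum_{t=1}^{m}\frac{2^{t}}{t+c}\sim\frac{2^{m+1}}{m}$ by reindexing and splitting the range at $\sqrt{m}$; this is more elementary (no named theorem beyond squeezing) and yields a reusable statement valid for any fixed shift $c$, at the cost of the extra bookkeeping needed to control the tail. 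Either route suffices, and both identify the same leading term $\frac{2^{n+1}}{n}$ for $L(n)$ and $U(n)$.
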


\begin{proof}
By Corollary \ref{cor-6}, we have $\sum\limits_{t=1}^{n}\frac{2^{t}-1}{t+1}\le \frac{Kf(Q_{n,k})}{2^{n-1}}\le \sum\limits_{t=1}^{n-2}\frac{2^{t}-1}{t}+3\frac{(n-2)2^{n-1}+1}{n(n-1)}$.\\
Hence
\[\begin{array}{lll}
\frac{\sum\limits_{t=1}^{n}\frac{2^{t}-1}{t+1}}{\frac{2^{n+1}}{n+1}}\le \frac{Kf(Q_{n, k})}{\frac{2^{2n}}{n+1}}\le \frac{\sum\limits_{t=1}^{n-2}\frac{2^{t}-1}{t}+3\frac{(n-2)2^{n-1}+1}{n(n-1)}}{\frac{2^{n+1}}{n+1}}.\end{array}\]
Denote by $A_{n}=\frac{\sum\limits_{t=1}^{n}\frac{2^{t}-1}{t+1}}{\frac{2^{n+1}}{n+1}}$ and $B_{n}=\frac{\sum\limits_{t=1}^{n-2}\frac{2^{t}-1}{t}+3\frac{(n-2)2^{n-1}+1}{n(n-1)}}{\frac{2^{n+1}}{n+1}}=\frac{\sum\limits_{t=1}^{n-2}\frac{2^{t}-1}{t}}{\frac{2^{n+1}}{n+1}}+3[\frac{n^{2}-n-2}{4n^{2}-4n}+\frac{n+1}{n(n-1)2^{n+1}}]$. In what follows, we will show that $\lim\limits_{n\to\infty}A_{n}=\lim\limits_{n\to\infty}B_{n}=1$.
Let $x_{n}=\sum\limits_{t=1}^{n}\frac{2^{t}-1}{t+1}=\frac{1}{2}+\frac{3}{3}+\cdots+\frac{2^{n}-1}{n+1}$ and  $y_{n}=\frac{2^{n+1}}{n+1}$. We have
\begin{equation}\label{lim-eq-1}\begin{array}{lll}
 \lim\limits_{n\to\infty}\frac{x_{n}-x_{n-1}}{y_{n}-y_{n-1}}=\lim\limits_{n\to\infty}\frac{\frac{2^{n}-1}{n+1}}{\frac{(n-1)2^{n}}{n(n+1)}}=\lim\limits_{n\to\infty}\frac{n}{n-1}-\lim\limits_{n\to\infty}\frac{n}{(n-1)2^{n}}=1-0=1.\end{array}\end{equation}

By the Stolz-Ces\'{a}ro Theorem, we get\\
\[\lim\limits_{n\to\infty}A_{n}=\lim\limits_{n\to\infty}\frac{x_{n}}{y_{n}}=1.\]
Let $x_1'=x_2'=0$, and for $n\ge 3$ let $x_{n}'=\sum\limits_{t=1}^{n-2}\frac{2^{t}-1}{t}=1+\frac{3}{2}+\frac{7}{3}+\cdots+\frac{2^{n-2}-1}{n-2}$, $y_{n}'=\frac{2^{n+1}}{n+1}$. As similar as (\ref{lim-eq-1}), we have
\[\begin{array}{lll}
\lim\limits_{n\to\infty}\frac{\sum\limits_{t=1}^{n-2}\frac{2^{t}-1}{t}}{\frac{2^{n+1}}{n+1}}=\lim\limits_{n\to\infty}\frac{x_{n}'-x_{n-1}'}{y_{n}'-y_{n-1}'}=\lim\limits_{n\to\infty}\frac{n^{2}+n}{4(n^{2}-3n+2)}-\lim\limits_{n\to\infty}\frac{n(n+1)}{(n-2)(n-1)2^{n}}=\frac{1}{4}.
\end{array}\]
Thus
\[\begin{array}{ll}
\lim\limits_{n\to\infty}B_{n}&=\lim\limits_{n\to\infty}\left(\frac{\sum\limits_{t=1}^{n-2}\frac{2^{t}-1}{t}}{\frac{2^{n+1}}{n+1}}+3[\frac{n^{2}-n-2}{4n^{2}-4n}+\frac{n+1}{n(n-1)2^{n+1}}]\right)\\
&=\lim\limits_{n\to\infty}\frac{\sum\limits_{t=1}^{n-2}\frac{2^{t}-1}{t}}{\frac{2^{n+1}}{n+1}}+3\lim\limits_{n\to\infty}\frac{n^{2}-n-2}{4n^{2}-4n}+3\lim\limits_{n\to\infty}\frac{n+1}{n(n-1)2^{n+1}}\\
&=\frac{1}{4}+\frac{3}{4}-0=1
\end{array}\]
By the Squeeze Theorem, we get\\
\[\begin{array}{lll}
\lim\limits_{n\to\infty}\frac{Kf(Q_{n, k})}{\frac{2^{2n}}{n+1}}=\lim\limits_{n\to\infty}A_{n}=\lim\limits_{n\to\infty}B_{n}=1.\end{array}\]
This completes the proof.
\end{proof}

\begin{remark}
Although we get explicit formula of $Kf(Q_{n, k})$ when $k=1$ or $n-1$ in Corollary \ref{cor-4} and Corollary \ref{cor-5}, respectively. The calculation of the general representation of $Kf(Q_{n, k})$ given in Theorem \ref{thm-4} is more complex. Fortunately, Theorem \ref{thm-7} provides a simple uniform approximation function for $Kf(Q_{n, k})$ which is also independent of $k$, it means that $Kf(Q_{n, k})$ can be replaced with $\frac{2^{2n}}{n+1}$ if $n$ is large enough.
\end{remark}

\end{document}